\documentclass[11pt]{amsart}

\usepackage{amscd,amsmath,latexsym,amsthm,amsfonts,amssymb,graphicx,geometry}

\usepackage[T1]{fontenc}
\usepackage[utf8]{inputenc}
\usepackage[english]{babel}
\usepackage{lmodern}
\usepackage{microtype}
\usepackage{amsmath,amssymb,amsthm,mathtools}
\usepackage{enumitem}
\usepackage{hyperref}
\usepackage[nameinlink,noabbrev]{cleveref}

\allowdisplaybreaks

\newtheorem{theorem}{Theorem}[section]
\newtheorem{proposition}[theorem]{Proposition}
\newtheorem{lemma}[theorem]{Lemma}
\newtheorem{corollary}[theorem]{Corollary}

\theoremstyle{definition}
\newtheorem{definition}[theorem]{Definition}

\theoremstyle{remark}
\newtheorem{remark}[theorem]{Remark}

\newcommand{\K}{\mathbb{K}}
\newcommand{\N}{\mathbb{N}}
\newcommand{\Lop}{\mathcal{L}}
\newcommand{\Kop}{\mathcal{K}}
\newcommand{\SSop}{\mathcal{SS}}
\newcommand{\Cal}{\operatorname{Cal}}
\newcommand{\Id}{I}
\newcommand{\ran}{\operatorname{ran}}
\newcommand{\codim}{\operatorname{codim}}
\newcommand{\ind}{\operatorname{ind}}
\newcommand{\Dom}{D}

\begin{document}
\title[The Lotz Property in Weakly Compactly Generated Spaces]{The Lotz Property in Weakly Compactly Generated Spaces}

\author[Araújo]{Gustavo Araújo}
\address[G. Araújo]{Departamento de Matem\'{a}tica \newline\indent
	Universidade Estadual da Para\'{i}ba \newline\indent
	Campina Grande - PB \newline\indent
	58.429-500 (Brazil)}
\email{gustavoaraujo@servidor.uepb.edu.br}

\author[Azevedo]{Mateus Azevedo}
\address[M. Azevedo]{Unidade Acad\^emica de Matemática \newline\indent
	Universidade Federal de Campina Grande \newline\indent
	Campina Grande - PB, Brazil \newline\indent
	58.429-900 (Brazil)}
\email{mateusimjs19@gmail.com}

\author[Bezerra]{Flank Bezerra}
\address[F. Bezerra]{Departamento de Matem\'atica \newline\indent
	Universidade Federal da Para\'iba \newline\indent
	Jo\~ao Pessoa - PB \newline\indent
	58.051-900 (Brazil)}
\email{flank@mat.ufpb.br}

\author[Barbosa]{Anderson Barbosa}
\address[A. Barbosa]{Departamento de Ciências Exatas e Tecnologia da Informação \newline\indent
	Universidade Federal Rural do Semi-\'Arido \newline\indent
	Rio Grande do Norte - RN \newline\indent
	59.515-000 (Brazil)}
\email{anderson.barbosa@ufersa.edu.br}

\subjclass[2020]{47D06, 46B20, 46B26, 46B28, 47A53.}

\keywords{Weakly compactly generated space, Schauder decomposition, $C_0$-semigroup, Lotz property, strictly singular operator, compact approximation property, Calkin algebra, hereditarily indecomposable Banach space.}

\thanks{G. Araújo was supported by Grant 2457/2026 from the Para\'iba State Research Foundation (FAPESq). F. Bezerra was partially supported by CNPq/Brasil through Grants 303039/2021-3 and 301466/2026-2.}

\begin{abstract}
	We study whether every infinite-dimensional weakly compactly generated (WCG) Banach space fails the Lotz property.  The answer is affirmative in the nonseparable case: a classical decomposition theorem for WCG spaces provides a Schauder decomposition, and an explicit diagonal $C_0$-semigroup has an unbounded generator.  The unrestricted assertion is false.  Our main abstract result is a Calkin-algebra criterion: if $X$ fails the bounded compact approximation property and, for some fixed $m$, every noninvertible element of $\Cal(X)=\Lop(X)/\Kop(X)$ has $m$th power zero, then $X$ has the Lotz property. In particular, this applies when every operator is scalar-plus-strictly-singular and one fixed power of every strictly singular operator is compact. Combining this criterion with the Maurey--Pisier--Szankowski extraction theorem and constructions of Argyros and Motakis yields separable hereditarily indecomposable Lotz spaces over both scalar fields: reflexive examples, and, over the complex field, an example containing no infinite-dimensional reflexive subspace.  Moreover, the ambient Argyros--Motakis spaces considered here are Lotz-saturated, although they themselves are non-Lotz.  Thus every nonseparable WCG space is non-Lotz, while the separable WCG class contains both Lotz and non-Lotz spaces.
\end{abstract}

\maketitle

\tableofcontents

\section{Introduction}

Let $X$ be a Banach space over $\K$, where $\K=\mathbb R$ or $\mathbb C$.
A family $(T(t))_{t\geq0}\subset\Lop(X)$ is a \emph{$C_0$-semigroup} if
\[
T(0)=\Id_X,\qquad T(t+s)=T(t)T(s)\quad(t,s\geq0),
\]
and
\[
\lim_{t\downarrow0}\|T(t)x-x\|=0\qquad(x\in X).
\]
Its \emph{infinitesimal generator} is the operator
\[
Ax=\lim_{t\downarrow0}\frac{T(t)x-x}{t},
\]
defined on the set $\Dom(A)$ of all $x\in X$ for which this limit exists.
The semigroup is \emph{uniformly continuous} if
\[
\lim_{t\downarrow0}\|T(t)-\Id_X\|=0.
\]
We say that $X$ has the \emph{Lotz property} if every $C_0$-semigroup on
$X$ is uniformly continuous.  Equivalently, every infinitesimal generator of a
$C_0$-semigroup on $X$ is bounded.  Lotz proved that every Grothendieck space
with the Dunford--Pettis property has this property
\cite{Lotz1985}; see also the partial converse for Banach lattices obtained by
van Neerven \cite{vanNeerven1992} and the modern account in
\cite{Budde2024}.

The question that motivates this paper is the following:
\begin{equation}\label{eq:main-question}
\text{Must every infinite-dimensional WCG Banach space be non-Lotz?}
\end{equation}
Here \emph{non-Lotz} means that at least one $C_0$-semigroup on the space is
not uniformly continuous.  The universal assertion is true for nonseparable
WCG spaces but false in the separable class.  This is not a dichotomy of
individual spaces: separable WCG spaces include both Lotz and non-Lotz
examples.

The positive answer of this question  comes from Schauder decompositions.  The classical
structure theory of weakly compactly generated spaces
\cite{AmirLindenstrauss1968,Lindenstrauss1971} implies that every
nonseparable WCG space admits such a decomposition.  We give all details of the construction of the associated multiplier: if $(X_n)_{n\in\mathbb{N}}$ is a Schauder
decomposition, then
\[
T(t)\Big(\sum_{n=1}^\infty x_n\Big)
=\sum_{n=1}^\infty e^{-nt}x_n
\qquad(t\geq0)
\]
defines a bounded $C_0$-semigroup whose infinitesimal generator equals $-n\Id$ on $X_n$. The infinitesimal generator is therefore unbounded, so the space is non-Lotz.

The negative answer to this question  is established by the construction presented in this paper.  We produce a
separable, reflexive, infinite-dimensional Lotz space $X_L$.  Every separable
Banach space is WCG, so $X_L$ is a counterexample to the unrestricted form
of \eqref{eq:main-question}.  Combining both conclusions gives the main
answer.

\begin{theorem}\label{thm:answer-intro}
	\begin{enumerate}[label=\textup{(\roman*)}]
		\item Every nonseparable WCG Banach space is non-Lotz.
		\item There exists a separable, reflexive, hereditarily indecomposable,
		infinite-dimensional real WCG Banach space with the Lotz property.
		\item There exists a separable, hereditarily indecomposable, complex WCG
		Banach space with the Lotz property and with no infinite-dimensional
		reflexive subspace.
	\end{enumerate}
	Consequently, not every infinite-dimensional WCG space is non-Lotz.
\end{theorem}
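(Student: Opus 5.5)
The theorem combines three independent assertions, and I would prove them separately, in the order stated.

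For (i), let $X$ be a nonseparable WCG space. By the Amir--Lindenstrauss theory \cite{AmirLindenstrauss1968,Lindenstrauss1971}, $X$ admits a projectional resolution of the identity $(P_\alpha)_{\omega\le\alpha\le\operatorname{dens}X}$. From it I would extract a sequence of norm-one projections $Q_1\le Q_2\le\cdots$ with $Q_nQ_m=Q_{\min(n,m)}$, nonzero successive differences $Q_{n}-Q_{n-1}$, and $Q_n x\to x$ for every $x\in X$. One way to get this is to take countably many levels $\alpha_n\uparrow\operatorname{dens}X$ when the density has countable cofinality. In general it is cleaner to use the classical consequence that a nonseparable WCG space has a Schauder decomposition $X=\bigoplus_n X_n$ into nonzero subspaces. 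Let $K=\sup_n\|Q_n\|$ be the decomposition constant and set $T(t)x=\sum_n e^{-nt}(Q_n-Q_{n-1})x$. Abel summation gives
\[
T(t)=\sum_n (e^{-nt}-e^{-(n+1)t})Q_n ,
\]
so $\|T(t)\|\le K$ because the coefficients are positive and sum to at most $1$. The semigroup law holds on finite sums and extends by density. Strong continuity follows from the uniform bound together with $T(t)x\to x$ on the dense set of finite sums. The generator contains every $x_n\in X_n$ with $Ax_n=-nx_n$, and $\|x_n\|=1$, so $A$ is unbounded. Hence $X$ is non-Lotz.

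For (ii) and (iii), I would use the Calkin-algebra criterion announced in the abstract. Its proof goes as follows. Let $(T(t))$ be a $C_0$-semigroup on $X$, and let $\pi\colon\Lop(X)\to\Cal(X)$ be the quotient map. Since $T(t)\to\Id$ strongly, a standard argument would give that $\pi(T(t))$ is invertible for small $t$. I would argue by contradiction: if $\pi(T(t_k))$ were noninvertible for some $t_k\downarrow 0$, then $\pi(T(mt_k))=\pi(T(t_k))^m=0$, so $T(mt_k)$ is compact. But then $T(s)$ is compact for all $s\ge mt_k$, hence for all $s>0$. Compact operators converging strongly to $\Id$ with uniform bounds would give the bounded compact approximation property, contradicting the hypothesis. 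So $\pi(T(t))$ is invertible in $\Cal(X)$ for every $t$. I then expect the remaining step to be the main obstacle. From the structure "every noninvertible element is $m$-nilpotent", I would try to show that $\Cal(X)$ is a division algebra modulo a nil ideal, hence essentially $\K$ plus a radical. Then $\pi(T(t))=\lambda(t)+N(t)$, and one hopes to conclude that $T(t)=S(t)+K(t)$ with $K(t)$ compact and $\|S(t)-\Id\|\to 0$. Lotz-type arguments would then show that the semigroup is norm continuous, using that a $C_0$-semigroup which is norm continuous modulo compacts and locally bounded has bounded generator. I would first try to pass through the scalar-plus-strictly-singular setting, where $T(t)=\lambda(t)\Id+S(t)$ and $S(t)^m$ is compact.

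Finally, I would instantiate the criterion with Argyros--Motakis hereditarily indecomposable spaces in which some fixed power of every strictly singular operator is compact. Maurey--Pisier--Szankowski extraction would produce a subspace failing the bounded compact approximation property. Hereditary indecomposability and the scalar-plus-strictly-singular structure pass to that subspace, which gives (ii) in a reflexive real version. For (iii), I would use the complex Argyros--Motakis space with no infinite-dimensional reflexive subspace, and this property is inherited by subspaces. All of these spaces are separable, hence WCG, and the final sentence of the theorem follows from (ii).
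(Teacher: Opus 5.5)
\textbf{Part (i).} This follows the paper's route: a Schauder decomposition extracted from the Amir--Lindenstrauss PRI, the diagonal semigroup $\sum_n e^{-nt}(Q_n-Q_{n-1})$, and generator $-n\Id$ on $X_n$. It is fine. Your levels $\alpha_n\uparrow\operatorname{dens}X$ only work when the cofinality is countable. Otherwise you must add the block $(\Id_X-P_\alpha)X$ with $\alpha=\sup_n\alpha_n$, as the paper does.

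\textbf{The gap: the Calkin criterion behind (ii) and (iii).} Your dichotomy is correct as far as it goes. If $\pi_X(T(t))$ is noninvertible for arbitrarily small $t$, then every $T(s)$ with $s>0$ is compact, and the BCAP follows. But this argument never uses unboundedness of $A$. The remaining case is the one where every $\pi_X(T(t))$ is invertible (think of $T(t)=e^{ct}\Id_X+S(t)$ with $S(t)$ strictly singular), and that is exactly where an unbounded generator would have to live.

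Your plan has no mechanism for this case. Strong continuity of $T$ gives no information about $t\mapsto\pi_X(T(t))$ in the quotient norm. So writing $\pi_X(T(t))=\lambda(t)+N(t)$ with $N(t)$ nilpotent yields neither $\lambda(t)\to1$ nor $N(t)\to0$. Even then, you would still need a separate theorem that semigroups which are norm continuous modulo compacts have bounded generators.

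\textbf{The missing idea.} Feed the \emph{resolvent}, not the semigroup, into the nilpotence hypothesis.
\begin{enumerate}[label=\textup{(\alph*)}]
\item If $A$ is unbounded, then $R(\lambda,A)$ is injective with range $\Dom(A)$. This range is dense and, by the closed graph theorem, different from $X$, so it is not closed.
\item Hence $R(\lambda,A)$ is not Fredholm, and by Atkinson $\pi_X(R(\lambda,A))$ is noninvertible. The case you could not handle never arises.
\item So $R(\lambda,A)^m$ is compact, and the Yosida approximants $J_\lambda^m=(\lambda R(\lambda,A))^m$ are compact and bounded by $(2M)^m$ for large $\lambda$.
\item They converge strongly to $\Id_X$, since $J_\lambda^m-\Id_X=(\Id_X+J_\lambda+\cdots+J_\lambda^{m-1})(J_\lambda-\Id_X)$. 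This gives the BCAP and the contradiction.
\end{enumerate}

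\textbf{Smaller points in the instantiation.} To invoke Maurey--Pisier--Szankowski, you need the $\ell_1$ spreading model in every subspace; that is what rules out nontrivial type. Also, the scalar-plus-strictly-singular property and the compactness of $S^m$ do not simply \emph{pass} to the extracted subspace. They are hereditary statements proved by Argyros and Motakis: $m=3$ for $\mathfrak X_{\mathrm{ISP}}$, where real hereditary indecomposability alone would not give scalar-plus-strictly-singular, and $m=2$ for the complex dual-method spaces.
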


The route to these results connects the Lotz property with the Calkin algebra
and approximation properties.  Write $\Lop(X)$, $\Kop(X)$ and $\SSop(X)$
for, respectively, the bounded, compact and strictly singular operators on
$X$, and put
\[
\Cal(X):=\Lop(X)/\Kop(X).
\]
Our abstract criterion is the following.

\begin{theorem}\label{thm:criterion-intro}
	Let $X$ fail the bounded compact approximation property.  Suppose that there
	is $m\in\N$ such that every noninvertible $a\in\Cal(X)$ satisfies $a^m=0$.
	Then $X$ has the Lotz property.
\end{theorem}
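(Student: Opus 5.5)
The proof will have two parts. First, a $C_0$-semigroup on $X$ has invertible Calkin image, because $X$ fails the bounded compact approximation property. Second, the nilpotency hypothesis forces the Calkin image to be norm continuous, and a Fredholm argument then makes the generator bounded.

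Fix a $C_0$-semigroup $(T(t))_{t\ge0}$ on $X$ with generator $A$, and let $q\colon\Lop(X)\to\Cal(X)$ be the quotient map. Then $a(t):=q(T(t))$ is a semigroup of commuting elements of $\Cal(X)$, with $\|a(t)\|\le M$ for $t\in[0,1]$.

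\emph{Step 1: every $a(t)$ is invertible.} Suppose $a(t_0)$ is noninvertible for some $t_0>0$. For each $n$ we have $a(t_0)=a(t_0/n)^n$. So $a(t_0/n)$ is noninvertible, and by hypothesis $a(mt_0/n)=a(t_0/n)^m=0$, that is, $T(mt_0/n)$ is compact. Given $s>0$, choose $n$ with $mt_0/n\le s$; then $T(s)=T(s-mt_0/n)T(mt_0/n)$ is compact. Hence $T(1/k)$ is compact for every $k$, $\sup_k\|T(1/k)\|\le M$, and $T(1/k)\to\Id_X$ strongly. This is exactly the bounded compact approximation property, a contradiction.

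\emph{Step 2: structure of $\Cal(X)$ and norm continuity of $a(\cdot)$.} Over $\mathbb C$, take $b\in\Cal(X)$ and $\lambda\in\sigma(b)$. Then $b-\lambda$ is noninvertible, so $(b-\lambda)^m=0$. Hence $\sigma(b)=\{\lambda\}$ and $b=\lambda+\nu$ with $\nu^m=0$.

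Apply this to $b=a(t)$ and write $a(t)=\lambda(t)(1+\nu(t))$. The elements commute, so by the spectral mapping theorem $\lambda$ is multiplicative. It is also nonvanishing by Step 1 and bounded on $[0,1]$, hence $\lambda(t)=e^{ct}$.

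Next set $L(t):=\log(1+\nu(t))$, given by the finite polynomial series since $\nu(t)^m=0. In the commutative algebra generated by the $a(t)$, $L$ is additive. It is also bounded on $[0,1]$, since $\|\nu(t)\|\le M/|\lambda(t)|+1$. An additive bounded map is $\mathbb{Q}$-linear and bounded near $0$, hence $L(t)=tL(1)$. Therefore $a(t)=e^{ct}\exp(tL(1))$, so $\|a(t)-1\|\to0$ as $t\downarrow0$.

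In the real case I would run this argument in the complexification. Alternatively, I would check directly that the hypothesis passes to the complexified Calkin algebra, or bypass the spectrum by using only noninvertibility of $a(t)-\lambda$ for real $\lambda$ together with the invertibility from Step 1. I expect this step, the reduction to "scalar plus nilpotent" and the resulting norm continuity in $\Cal(X)$, to be the main obstacle, especially over $\mathbb R$.

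\emph{Step 3: from essential continuity to bounded generator.} Put $V(t)=\int_0^tT(s)\,ds$. By Step 2, $\|q(V(t))/t-1\|\to0$, so for small $t$ the operator $V(t)$ is Fredholm. Hence $R:=\ran V(t)$ is a closed subspace of finite codimension, $R\subset\Dom(A)$, and $AV(t)=T(t)-\Id_X$ is bounded. Since $V(t)$ restricted to a complement of its kernel is an isomorphism onto $R$, the restriction $A|_R$ is bounded. Because $\Dom(A)$ is dense and $R$ has finite codimension, $\Dom(A)=R\oplus F$ with $F$ finite-dimensional, and in fact $\Dom(A)=X$. Since $A$ is closed, the closed graph theorem shows $A$ is bounded. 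So $T$ is uniformly continuous, and $X$ has the Lotz property.
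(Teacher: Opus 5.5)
Your Step 1 is correct; it is the same mechanism as the paper's (uniformly bounded compact operators converging strongly to $\Id_X$ give the BCAP). Step 2, however, has a genuine gap even over $\mathbb C$.

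From ``$\lambda$ is multiplicative, nonvanishing and bounded on $[0,1]$'' you can only conclude $|\lambda(t)|=e^{(\operatorname{Re}c)t}$, because $\log|\lambda|$ is additive and bounded above on an interval. The unimodular factor $\lambda(t)/|\lambda(t)|$ is a homomorphism of $[0,\infty)$ into the unit circle. It is automatically bounded, and a priori it could be $e^{i\phi(t)}$ with $\phi$ a discontinuous additive function. Nothing in your argument gives continuity, or even measurability, of $t\mapsto\lambda(t)$. Indeed $t\mapsto a(t)$ has no a priori continuity in $\Cal(X)$; that is exactly what you are trying to prove. So norm continuity of $a(\cdot)$ is not established.

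This can be repaired by using the failure of the BCAP a second time. Let $\tilde L\in\Lop(X)$ lift $L(1)$. Then $T(t)e^{-t\tilde L}=\lambda(t)\Id_X+K(t)$ with $K(t)\in\Kop(X)$, and this family is strongly continuous. If $t_n\to t$ with $\lambda(t_n)\to\mu\neq\lambda(t)$, then $(K(t_n)-K(t))/(\lambda(t)-\mu)$ are uniformly bounded compact operators converging strongly to $\Id_X$, a contradiction. But you do not give this argument.

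Step 3 has a second unjustified move. $V(t)$ is only a strong integral, and $q$ is not continuous for the strong operator topology. So $q(V(t))=\int_0^t a(s)\,ds$ needs the nontrivial fact (due to Voigt) that the strong integral of a strongly continuous compact-operator-valued function is compact, applied to $T(s)-\lambda(s)e^{s\tilde L}$.

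Finally, the real case is not done, and it is the case needed for $\mathfrak X_{\mathrm{ISP}}$. The hypothesis does not pass formally to $\Cal(X_{\mathbb C})=\Cal(X)\otimes_{\mathbb R}\mathbb C$. For example, in $\mathbb C$ viewed as a real Banach algebra the only noninvertible element is $0$, yet $\mathbb C\otimes_{\mathbb R}\mathbb C\cong\mathbb C\oplus\mathbb C$ contains noninvertible, non-nilpotent idempotents. Over $\mathbb R$, an element $j$ with $j^2=-1$ is not real-scalar plus nilpotent. Moreover $j-\lambda$ is invertible for every real $\lambda$, so your proposed bypass through real $\lambda$ yields no information.

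The missing idea, which makes Steps 2 and 3 unnecessary, is to apply your Step 1 mechanism to the resolvent rather than to the semigroup. This is the paper's proof. Suppose $A$ is unbounded. Then $R(\lambda,A)$ is injective with range $\Dom(A)$, which is dense and different from $X$, hence not closed. So $R(\lambda,A)$ is not Fredholm, $q(R(\lambda,A))$ is noninvertible by Atkinson's theorem, and $R(\lambda,A)^m$ is compact.

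The Yosida powers $J_\lambda^m=(\lambda R(\lambda,A))^m$, for $\lambda\ge\lambda_0$, are therefore compact and bounded by $(2M)^m$. They converge strongly to $\Id_X$, since $J_\lambda^m-\Id_X=(\Id_X+J_\lambda+\cdots+J_\lambda^{m-1})(J_\lambda-\Id_X)$. This gives the BCAP, a contradiction. The argument uses the nilpotence hypothesis on a single element, needs no structure theory of $\Cal(X)$ and no continuity of the semigroup in the Calkin algebra, and works verbatim over both scalar fields.
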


The mechanism is short.  If an unbounded infinitesimal generator $A$ existed, then every
sufficiently far right resolvent $R(\lambda,A)$ would be non-Fredholm.  By
Atkinson's theorem its image in the Calkin algebra would be noninvertible and
hence nilpotent of index at most $m$.  Consequently the Yosida approximants
\[
J_\lambda=\lambda R(\lambda,A),\ R(\lambda,A):=(\lambda I_X-A)^{-1}
\]
would have compact $m$th powers.  These powers remain uniformly bounded and
converge strongly to the identity, giving the bounded compact approximation
property, a contradiction.

A convenient consequence applies to spaces with few operators.  It suffices
that
\begin{equation}\label{eq:scalar-plus-ss-intro}
	\Lop(X)=\K\Id_X+\SSop(X)
\end{equation}
and, for one fixed $m$, that $S^m$ be compact for every
$S\in\SSop(X)$.  Notice that this power condition is weaker than requiring
every product of $m$ possibly different strictly singular operators to be
compact.

The criterion applies hereditarily inside several spaces of Argyros and
Motakis.  Every infinite-dimensional subspace of
$\mathfrak X_{\mathrm{ISP}}$ contains an $\ell_1$ spreading model, has the
scalar-plus-strictly-singular property, and has compact triple products of
strictly singular operators \cite{ArgyrosMotakis2014}.  The spreading model
rules out every nontrivial Rademacher type.  The
Maurey--Pisier--Szankowski extraction theorem then supplies a further
subspace without the compact approximation property.  The same method
applies, with compact squares, to the complex versions of the spaces from the
dual construction in \cite{ArgyrosMotakisDual}.

\begin{theorem}\label{thm:main-intro}
	The following separable hereditarily indecomposable Lotz spaces exist:
	\begin{enumerate}[label=\textup{(\roman*)}]
		\item a real reflexive subspace of $\mathfrak X_{\mathrm{ISP}}$;
		\item complex reflexive examples coming from the well-founded-tree
		spaces of Argyros and Motakis;
		\item a complex example, contained in their universal-tree space, with
		no infinite-dimensional reflexive subspace.
	\end{enumerate}
	All these spaces are WCG.  In fact, the corresponding ambient
	Argyros--Motakis spaces are Lotz-saturated: every infinite-dimensional closed
	subspace contains an infinite-dimensional closed Lotz subspace.
\end{theorem}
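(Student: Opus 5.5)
The plan is to derive Theorem~\ref{thm:main-intro} from the abstract criterion, Theorem~\ref{thm:criterion-intro}, in its scalar-plus-strictly-singular form: if $\Lop(Y)=\K\Id_Y+\SSop(Y)$, $S^m\in\Kop(Y)$ for every $S\in\SSop(Y)$, and $Y$ fails the bounded compact approximation property, then $Y$ is Lotz. This form follows from the criterion as follows. Take $a=\lambda+\pi(S)$ in $\Cal(Y)$. If $\lambda\neq0$, then $\lambda\Id+S$ is Fredholm of index zero. Its kernel and cokernel are finite-dimensional, so there is a finite-rank $F$ with $\lambda\Id+S+F$ invertible, and hence $a$ is invertible. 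If $\lambda=0$ and $Y$ is infinite-dimensional, then $a=\pi(S)$ is noninvertible, and $a^m=\pi(S^m)=0$. So the noninvertible elements of $\Cal(Y)$ are exactly the $\pi(S)$, and each has $m$th power zero.

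For an ambient space $Z$ (either $\mathfrak X_{\mathrm{ISP}}$, or one of the complex Argyros--Motakis spaces from \cite{ArgyrosMotakisDual}), I fix an infinite-dimensional closed subspace $W\subseteq Z$ and produce a Lotz subspace $Y\subseteq W$. This proves saturation, and taking $W=Z$ gives the examples themselves.

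\textbf{Step 1.} By \cite{ArgyrosMotakis2014} (respectively \cite{ArgyrosMotakisDual}), $W$ contains an $\ell_1$ spreading model. Such a spreading model forces $W$ to have no nontrivial Rademacher type: $\ell_1$ would be finitely representable in $W$, via a standard argument on normalized spreading sequences.

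\textbf{Step 2.} Apply the Maurey--Pisier--Szankowski extraction theorem. A space without nontrivial type contains a subspace failing the compact approximation property, and hence failing the bounded compact approximation property. This gives a closed infinite-dimensional $Y\subseteq W$ with this failure. I expect this to be the main obstacle: one must quote the extraction theorem in exactly the form that yields failure of the \emph{compact} approximation property, uniformly (without the bounded version) and over both scalar fields. The complex case may need a remark that the theorem's proof via finite representability of $\ell_1^n$ or $\ell_\infty^n$ goes through verbatim.

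\textbf{Step 3.} The operator hypotheses are hereditary in these spaces, in the sense that every infinite-dimensional subspace $Y$ satisfies $\Lop(Y)=\K\Id_Y+\SSop(Y)$. For $\mathfrak X_{\mathrm{ISP}}$, the triple products of strictly singular operators on $Y$ are compact, so $m=3$. For the complex dual-construction spaces the corresponding statement gives compact squares, so $m=2$. Both facts are quoted from Argyros--Motakis. The criterion now gives that $Y$ is Lotz.

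The remaining properties follow directly. HI passes to subspaces, so $Y$ is HI. Separability gives WCG. For (i) and (ii), reflexivity of the ambient space passes to $Y$. For (iii), the universal-tree space has no infinite-dimensional reflexive subspace, and this is inherited by $Y$. Part (ii) of Theorem~\ref{thm:answer-intro} is (i) here, and part (iii) there is (iii) here.

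Finally, the ambient spaces are non-Lotz. They have a Schauder basis, or more generally a Schauder decomposition, so the diagonal semigroup $\sum e^{-nt}x_n$ described in the introduction has an unbounded generator.
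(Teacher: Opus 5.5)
Your proposal is correct and follows essentially the same route as the paper. The chain is: the $\ell_1$ spreading model rules out nontrivial type; Maurey--Pisier--Szankowski yields a subspace failing the CAP, hence the BCAP; and the hereditary scalar-plus-strictly-singular property, with $m=3$ for $\mathfrak X_{\mathrm{ISP}}$ and $m=2$ for the dual-method spaces, feeds \cref{cor:few-operators-criterion}. HI, separability (hence WCG), and reflexivity or its absence pass to subspaces, and the ambient Schauder bases give non-Lotz via \cref{prop:Schauder-non-Lotz}.

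The differences are cosmetic: you invert $\lambda\Id+S$ modulo compacts by an index-zero finite-rank perturbation rather than the explicit nilpotent Neumann series, and you go through finite representability of $\ell_1$ rather than the direct Rademacher estimate of \cref{lem:l1-no-type}. The complex-field point you flag for the extraction theorem is also handled in the paper only by citation.
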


The distinction between the approximation property and the compact
approximation property is essential here: the latter is strictly weaker, as
shown by Willis \cite{Willis1992}.  Thus it would not be enough merely to
extract a subspace without the approximation property.

The classical sufficient condition of Lotz produces no
infinite-dimensional separable examples.  Recall that $X$ is a
\emph{Grothendieck space} if every weak-star convergent sequence in $X^*$ is
weakly convergent, and $X$ has the \emph{Dunford--Pettis property} if every
weakly compact operator from $X$ into an arbitrary Banach space is completely
continuous, that is, it sends weakly convergent sequences to norm-convergent
sequences.  A separable Grothendieck space is reflexive: the dual unit ball is
weak-star compact and metrizable, so every sequence in it has a weak-star
convergent subsequence; the Grothendieck property makes that subsequence
weakly convergent, and the Eberlein--\v Smulian theorem yields weak compactness
of the dual unit ball.  If, in addition, $X$ has the Dunford--Pettis property,
then the weakly compact operator $\Id_X$ is completely continuous.  Every
bounded sequence consequently has a norm-convergent subsequence, so the unit
ball is norm compact and $X$ is finite-dimensional. Thus the standard
Grothendieck--Dunford--Pettis class of Lotz spaces is necessarily
nonseparable in infinite dimension.  Against this background, the above examples
give a new mechanism for producing separable Lotz spaces.  The author
is not aware of a previously recorded infinite-dimensional separable
example; see
\cite{GonzalezKania2021,Budde2024} for background on the two classical
properties and on Lotz's theorem.

The remainder of this paper is structured as follows. Section \ref{SecWCGSpacSchuderDec} introduces WCG spaces and Schauder decompositions and discusses their  relationship with the Lotz property. Section \ref{SecPrelimin} presents fundamental conceptions and definitions of the $C_0-$semigroup theory, compact approximation properties, and relevant notions from the Fredholm theory. Section \ref{SecCalkinAlg} presents a  Calkin criterion on the Lotz property. Section \ref{SecArgyros-Motakis} states and proves the results about Argyros–Motakis space. Concluding the proofs of our main results are finalized, and remarks are discussed in Section \ref{SecWCG}.

\section{WCG spaces and Schauder decompositions}\label{SecWCGSpacSchuderDec}

\begin{definition}
	A Banach space $X$ is \emph{weakly compactly generated}, abbreviated WCG,
	if there is a weakly compact set $K\subset X$ such that
	\[
	\overline{\operatorname{span}}K=X.
	\]
	The closure is taken in the norm topology.  This class was introduced and
	systematically studied by Amir and Lindenstrauss
	\cite{AmirLindenstrauss1968}.
\end{definition}

\begin{lemma}\label{lem:separable-WCG}
	Every separable Banach space is WCG.
\end{lemma}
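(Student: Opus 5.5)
The plan is to exhibit the weakly compact generating set explicitly as a norm-null sequence together with its limit. If $X=\{0\}$ there is nothing to prove: take $K=\{0\}$. Otherwise, since $X$ is separable, I will fix a dense sequence $(x_n)_{n\in\N}$ in $X$; discarding zero terms if necessary, I may assume $x_n\neq 0$ for all $n$. I then define
\[
K:=\{0\}\cup\Big\{\frac{x_n}{n\|x_n\|}:n\in\N\Big\}.
\]

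The first step is to show that $K$ is norm compact, and hence weakly compact. Put $y_n:=x_n/(n\|x_n\|)$, so that $\|y_n\|=1/n\to0$. Let $(U_i)$ be any open cover of $K$ in the norm topology. Some $U_{i_0}$ contains $0$, and it then contains all $y_n$ with $n$ large. Finitely many further members of the cover handle the remaining points, so $K$ is norm compact. The identity map from $(X,\|\cdot\|)$ to $(X,\text{weak})$ is continuous, so norm compactness implies weak compactness.

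The second step is to check that $\overline{\operatorname{span}}\,K=X$. Each $x_n=n\|x_n\|\,y_n$ belongs to $\operatorname{span}K$. Therefore $\operatorname{span}K$ contains the dense set $\{x_n\}$, and its norm closure is all of $X$.

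No step here is a real obstacle; the only point requiring care is the normalization. Simply taking $K=\{x_n\}$ would fail, because a dense sequence is unbounded in general and has no weakly compact closure. Scaling the terms to be norm-null, and adjoining their limit $0$, is exactly what produces compactness without changing the linear span.
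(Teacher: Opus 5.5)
Your proof is correct and is essentially the paper's argument: the paper also rescales a dense sequence into a norm-null one and adjoins $0$, taking $K=\{0\}\cup\{2^{-n}u_n\}$ with $(u_n)$ dense in the closed unit ball. Compactness, hence weak compactness, and density of the span then follow exactly as in your proposal; the only difference is your normalization $x_n/(n\|x_n\|)$ in place of $2^{-n}u_n$.
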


\begin{proof}
	Choose a norm-dense sequence $(u_n)_{n\in\mathbb{N}}$ in the closed unit ball of
	$X$ and put
	\[
	K:=\{0\}\cup\{2^{-n}u_n:n\in\N\}.
	\]
	The sequence $2^{-n}u_n$ converges to zero in norm.  Hence $K$ is norm
	compact and therefore weakly compact.  Since multiplication by a nonzero
	scalar does not change linear span,
	\[
	\overline{\operatorname{span}}K
	=\overline{\operatorname{span}}\{u_n:n\in\N\}=X.
	\]
	Thus $X$ is WCG.
\end{proof}

\begin{definition}\label{def:Schauder-decomposition}
	A sequence $(X_n)_{n\in\mathbb{N}}$ of nonzero closed subspaces of a Banach space
	$X$ is a \emph{Schauder decomposition} of $X$ if every $x\in X$ has a
	unique representation
	\[
	x=\sum_{n=1}^\infty x_n,
	\qquad x_n\in X_n,
	\]
	where the series converges in norm.  The coordinate maps $Q_nx=x_n$ and
	the partial-sum projections
	\[
	P_Nx:=\sum_{n=1}^NQ_nx
	\]
	are bounded.  Since $P_Nx\to x$ for every $x\in X$, the uniform boundedness
	principle gives the finite decomposition constant
	\begin{equation}\label{eq:decomposition-constant}
		K_D:=\sup_{N\in\mathbb{N}}\|P_N\|<\infty.
	\end{equation}
	A Schauder basis is the special case in which every $X_n$ is
	one-dimensional.
\end{definition}

\begin{proposition}\label{prop:Schauder-non-Lotz}
	Every Banach space admitting a Schauder decomposition is non-Lotz.
\end{proposition}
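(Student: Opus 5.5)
Given a Schauder decomposition $(X_n)_{n\in\N}$ with coordinate maps $Q_n$ and decomposition constant $K_D$ from \eqref{eq:decomposition-constant}, I will define
\[
T(t)x:=\sum_{n=1}^\infty e^{-nt}Q_nx\qquad(t\ge0,\ x\in X)
\]
and show that this is a $C_0$-semigroup that is not uniformly continuous. The key tool is Abel summation. Write $P_0:=0$, so that $Q_n=P_n-P_{n-1}$. For a nonincreasing sequence $1\ge c_1\ge c_2\ge\dots\ge0$ with $c_n\to0$, we have
\[
\sum_{n=1}^N c_nQ_nx=\sum_{n=1}^{N-1}(c_n-c_{n+1})P_nx+c_NP_Nx.
\]
This gives $\bigl\|\sum_{n=M}^N c_nQ_nx\bigr\|\le 2c_M\sup_{M-1\le k\le N}\|P_kx-P_{M-1}x\|$, and the latter tends to $0$ as $M\to\infty$ because $P_kx\to x$. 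Hence the series converges, and $\|T(t)\|\le K_D$ (more precisely $\|T(t)x\|\le K_D\|x\|$ via the telescoped form with $c_n=e^{-nt}$, $t>0$). At $t=0$ we get $T(0)=\Id_X$ directly.

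For the semigroup law, note that $Q_kT(t)x=e^{-kt}Q_kx$, since each $Q_k$ is bounded and $Q_kQ_n=\delta_{kn}Q_n$ by uniqueness of the representation. Uniqueness then gives $T(t)T(s)x=\sum_n e^{-nt}e^{-ns}Q_nx=T(t+s)x$.

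For strong continuity at $0$, let $x\in X$ and $\varepsilon>0$. Choose $N$ with $\|x-P_Nx\|<\varepsilon$. Then
\[
\|T(t)x-x\|\le\|T(t)P_Nx-P_Nx\|+\|T(t)(x-P_Nx)\|+\|x-P_Nx\|,
\]
which is at most $\sum_{n\le N}(1-e^{-nt})\|Q_nx\|+(K_D+1)\varepsilon$. The first term tends to $0$ as $t\downarrow0$, so strong continuity follows.

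To see that the semigroup is not uniformly continuous, pick a unit vector $x_n\in X_n$, which exists since $X_n\neq\{0\}$. Then $T(t)x_n=e^{-nt}x_n$, so
\[
\|T(t)-\Id_X\|\ge 1-e^{-nt}\quad\text{for every }n.
\]
Letting $n\to\infty$ gives $\|T(t)-\Id_X\|\ge1$ for every $t>0$. Equivalently, the generator $A$ satisfies $Ax_n=-nx_n$ and is therefore unbounded. Hence $X$ is non-Lotz.

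The main obstacle is the uniform bound $\sup_t\|T(t)\|<\infty$ together with the convergence of the series. Both rest on the Abel summation estimate above, which uses the monotonicity of $n\mapsto e^{-nt}$ and the constant $K_D$; everything else is formal.
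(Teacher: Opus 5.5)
Your proof is correct and follows essentially the same route as the paper. You use the same diagonal semigroup $T(t)=\sum_n e^{-nt}Q_n$, bounded by Abel summation against the partial-sum projections to get $\|T(t)\|\le K_D$, and you get strong continuity from the same finite-tail splitting. The only difference is the last step: your direct estimate $\|T(t)-\Id_X\|\ge\sup_n(1-e^{-nt})=1$ for $t>0$ replaces the paper's computation of the generator ($\|Au_n\|=n$) and its appeal to the bounded-generator characterization of uniform continuity, which slightly simplifies the argument.
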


\begin{proof}
	Let $(X_n)_{n\in\mathbb{N}}$ be a Schauder decomposition and use the notation of
	\cref{def:Schauder-decomposition}.  For $t>0$ and $N\in\N$, set
	\[
	T_N(t):=\sum_{n=1}^Ne^{-nt}Q_n.
	\]
	Abel summation gives the exact operator identity
	\begin{equation}\label{eq:Abel-multiplier}
		\sum_{n=1}^Na_nQ_n
		=a_NP_N+\sum_{n=1}^{N-1}(a_n-a_{n+1})P_n
	\end{equation}
	for arbitrary scalars $a_1,\ldots,a_N$.  With $a_n=e^{-nt}$, the sequence
	$(a_n)$ is positive and decreasing, and therefore
	\begin{align}
		\|T_N(t)\|
		&\leq K_D\left(a_N+\sum_{n=1}^{N-1}(a_n-a_{n+1})\right) \notag\\
		&=K_Da_1=K_De^{-t}.                         \label{eq:diagonal-bound}
	\end{align}
	Moreover, if $M>N$, another application of Abel summation yields
	\begin{align*}
		\sum_{n=N+1}^Ma_nQ_n
		&=a_MP_M-a_{N+1}P_N
		+\sum_{n=N+1}^{M-1}(a_n-a_{n+1})P_n,
	\end{align*}
	and hence
	\begin{align}
		\left\|\sum_{n=N+1}^Me^{-nt}Q_n\right\|
		&\leq K_D\left(a_M+a_{N+1}
		+\sum_{n=N+1}^{M-1}(a_n-a_{n+1})\right)\notag\\
		&=2K_Da_{N+1}\\
		&=2K_De^{-(N+1)t}.              \label{eq:diagonal-tail}
	\end{align}
	Thus $(T_N(t))_N$ is Cauchy in operator norm.  Define
	\begin{equation}\label{eq:diagonal-semigroup}
		T(0):=\Id_X,
		\qquad
		T(t):=\sum_{n=1}^\infty e^{-nt}Q_n\quad(t>0).
	\end{equation}
	Passing to the limit in \eqref{eq:diagonal-bound} gives
	\begin{equation}\label{eq:diagonal-uniform-bound}
		\|T(t)\|\leq K_De^{-t}\leq K_D
		\qquad(t>0).
	\end{equation}
	
	For $x=\sum_nx_n$, the continuity of each coordinate projection gives
	$Q_nT(t)x=e^{-nt}x_n$.  Consequently,
	\[
	T(t)T(s)x
	=\sum_{n=1}^\infty e^{-nt}e^{-ns}x_n
	=T(t+s)x,
	\]
	so $(T(t))_{t\geq0}$ has the semigroup property.
	
	We verify strong continuity without suppressing the tail estimate.  Given
	$x\in X$ and $\varepsilon>0$, choose $N$ so large that
	\[
	(K_D+1)\|x-P_Nx\|<\frac{\varepsilon}{2}.
	\]
	For $t>0$, \eqref{eq:diagonal-uniform-bound} implies
	\begin{align*}
		\|T(t)x-x\|
		&\leq \|T(t)(x-P_Nx)\|+\|x-P_Nx\|
		+\|T(t)P_Nx-P_Nx\|\\
		&\leq (K_D+1)\|x-P_Nx\|
		+\sum_{n=1}^N|e^{-nt}-1|\,\|Q_nx\|.
	\end{align*}
	The finite sum tends to zero as $t\downarrow0$, proving
	$T(t)x\to x$.  The same finite-tail argument, with
	$|e^{-nt}-e^{-ns}|$ in place of $|e^{-nt}-1|$, proves strong continuity at
	every $s>0$.  Therefore $(T(t))_{t\geq0}$ is a $C_0$-semigroup.
	
	Let $A$ be its infinitesimal generator.  If $u\in X_n$, then
	\[
	T(t)u=e^{-nt}u
	\]
	and hence
	\begin{equation}\label{eq:diagonal-generator}
		Au=\lim_{t\downarrow0}\frac{e^{-nt}-1}{t}u=-nu.
	\end{equation}
	In fact, the whole infinitesimal generator is described by
	\begin{equation}\label{eq:diagonal-domain}
		\Dom(A)=\left\{x\in X:\sum_{n=1}^\infty nQ_nx
		\text{ converges in }X\right\},
		\qquad
		Ax=-\sum_{n=1}^\infty nQ_nx.
	\end{equation}
	To verify this, first suppose that $Ax$ exists.  Since $Q_n$ is bounded,
	\[
	Q_nAx
	=\lim_{t\downarrow0}Q_n\frac{T(t)x-x}{t}
	=\lim_{t\downarrow0}\frac{e^{-nt}-1}{t}Q_nx
	=-nQ_nx.
	\]
	Therefore
	$P_NAx=-\sum_{n=1}^NnQ_nx\to Ax$, proving convergence of the series in
	\eqref{eq:diagonal-domain}.  Conversely, suppose
	$y:=\sum_{n=1}^\infty nQ_nx$ exists.  Put
	\[
	b_n(t):=\frac{1-e^{-nt}}{nt}\qquad(t>0).
	\]
	For every $t>0$, $0<b_n(t)\leq1$, the sequence $(b_n(t))_n$ is decreasing,
	because the derivative of $f(s)=(1-e^{-s})/s$ is
	\[
	f'(s)=\frac{(s+1)e^{-s}-1}{s^2}\leq0
	\quad(s>0)
	\]
	by $e^s\geq1+s$; moreover, $b_n(t)\to1$ as $t\downarrow0$ for each fixed
	$n$.  Abel summation,
	now applied to the Schauder expansion
	$y=\sum_{n=1}^\infty nQ_nx$, shows that the diagonal multipliers
	$\sum_{n=1}^\infty b_n(t)Q_n$ are uniformly bounded by $K_D$.  The same finite-tail
	argument used above consequently gives
	\[
	\sum_{n=1}^\infty b_n(t)nQ_nx\longrightarrow y
	\qquad(t\downarrow0).
	\]
	Since
	\[
	\frac{T(t)x-x}{t}
	=-\sum_{n=1}^\infty b_n(t)nQ_nx,
	\]
	we obtain $x\in\Dom(A)$ and $Ax=-y$, proving
	\eqref{eq:diagonal-domain}.
	
	Choose $u_n\in X_n$ with $\|u_n\|=1$.  Equation
	\eqref{eq:diagonal-generator} gives
	\[
	\|Au_n\|=n\longrightarrow\infty.
	\]
	Thus $A$ is unbounded.  Since a $C_0$-semigroup is uniformly continuous if
	and only if its infinitesimal generator is bounded
	\cite[Theorem~I.3.7]{EngelNagel2000}, the semigroup in
	\eqref{eq:diagonal-semigroup} is not uniformly continuous.  Hence $X$ is
	non-Lotz.
\end{proof}

We now use the classical WCG decomposition theorem.  In the language needed
here, it is a consequence of the projectional resolution of the identity
furnished by the Amir--Lindenstrauss structure theorem
\cite{AmirLindenstrauss1968}; see Lindenstrauss's decomposition note
\cite{Lindenstrauss1971}.

A \emph{bounded projectional resolution of the identity} (PRI), indexed by
an ordinal $\kappa$, is a family of bounded projections
$(P_\alpha)_{0\leq\alpha\leq\kappa}$ such that
\[
P_0=0,\qquad P_\kappa=\Id_X,
\qquad \sup_{0\leq\alpha\leq\kappa}\|P_\alpha\|<\infty,
\]
and
\[
P_\alpha P_\beta=P_\beta P_\alpha
=P_{\min\{\alpha,\beta\}}
\qquad(0\leq\alpha,\beta\leq\kappa).
\]
At every limit ordinal $\gamma\leq\kappa$, it is also required that
\[
P_\gamma X
=\overline{\bigcup_{\alpha<\gamma}P_\alpha X}.
\]
The \emph{density character} $\operatorname{dens}(Y)$ of a topological space
$Y$ is the least cardinality of a dense subset of $Y$.  For a WCG Banach
space $X$, the classical structure theorem supplies a PRI indexed by the
initial ordinal of $\operatorname{dens}(X)$ and satisfying
\[
\operatorname{dens}(P_\alpha X)
\leq\max\{\aleph_0,|\alpha|\}
\qquad(\alpha<\kappa).
\]
In particular, when $X$ is nonseparable, this resolution contains infinitely
many distinct projections.

\begin{theorem}[WCG decomposition theorem]
	\label{thm:nonseparable-WCG-decomposition}
	Every nonseparable WCG Banach space admits a Schauder decomposition.
\end{theorem}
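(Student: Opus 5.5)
We plan to extract a Schauder decomposition directly from the PRI $(P_\alpha)_{0\le\alpha\le\kappa}$ supplied by the Amir--Lindenstrauss structure theorem, where $\kappa$ is the initial ordinal of $\operatorname{dens}(X)$. Put $M:=\sup_\alpha\|P_\alpha\|<\infty$. Since $X$ is nonseparable, $\kappa\geq\omega_1$ is an uncountable limit ordinal. The plan is to pick a strictly increasing sequence of ordinals $0=\alpha_0<\alpha_1<\alpha_2<\cdots<\kappa$ with $\sup_n\alpha_n=\kappa$, and then set
\[
X_n:=(P_{\alpha_n}-P_{\alpha_{n-1}})X,\qquad n\geq1.
\]
The commutation relation $P_\alpha P_\beta=P_{\min\{\alpha,\beta\}}$ shows that $P_{\alpha_n}-P_{\alpha_{n-1}}$ are mutually annihilating projections. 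Hence the partial sums are $P_{\alpha_N}$, and these are uniformly bounded by $M$.

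The obstacle is that such a cofinal sequence exists only when $\operatorname{cf}(\kappa)=\omega$. In general, $\kappa$ is a regular cardinal such as $\omega_1$, and then no countable sequence reaches $\kappa$. In that case $\overline{\bigcup_n P_{\alpha_n}X}=P_\gamma X$ with $\gamma=\sup_n\alpha_n<\kappa$, which is a proper subspace. To handle this, we will not require the sequence to be cofinal. Instead, we absorb the remainder into one summand. Choose any strictly increasing $\alpha_0=0<\alpha_1<\cdots$ in $\kappa$ with $P_{\alpha_{n}}\neq P_{\alpha_{n-1}}$; this is possible because the resolution has infinitely many distinct projections. Let $\gamma:=\sup_n\alpha_n\le\kappa$. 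By the limit-ordinal continuity condition, $P_\gamma X=\overline{\bigcup_nP_{\alpha_n}X}$, and hence $P_{\alpha_n}x\to P_\gamma x$ for all $x$. This convergence holds on the dense union by stabilization, and extends to all $x$ by the uniform bound $M$. We then define
\[
X_1:=P_{\alpha_1}X\oplus(\Id_X-P_\gamma)X,\qquad X_n:=(P_{\alpha_n}-P_{\alpha_{n-1}})X\quad(n\ge2),
\]
with coordinate projections $Q_1=P_{\alpha_1}+(\Id_X-P_\gamma)$ and $Q_n=P_{\alpha_n}-P_{\alpha_{n-1}}$.

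It remains to check that this is a Schauder decomposition in the sense of \cref{def:Schauder-decomposition}. Each $Q_n$ is a bounded projection, and $Q_nQ_m=0$ for $n\neq m$, which follows from $P_\gamma P_{\alpha_k}=P_{\alpha_k}$. The partial sums are $\sum_{n\le N}Q_n=P_{\alpha_N}+\Id_X-P_\gamma$, and these tend strongly to $\Id_X$ by the convergence above. Each $X_n$ is nonzero because consecutive projections are distinct, and uniqueness of the expansion follows from the orthogonality of the $Q_n$: applying $Q_m$ to $\sum_n x_n=0$ gives $x_m=0$.

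The main point needing care is therefore the cofinality issue and the choice of an infinite strictly increasing chain with distinct projections. The latter can be obtained by passing to ordinals where $\alpha\mapsto P_\alpha$ actually changes. Infinitely many such changes exist because $X$ is nonseparable while each $P_\alpha X$ with $\alpha$ countable has density at most $\aleph_0$. Indeed, the countable ordinals $\alpha<\omega_1\le\kappa$ give separable ranges whose union must grow, since otherwise continuity would force $P_{\omega_1}X$ to be separable and to equal some $P_\alpha X$. Cleaner still, one may simply use the paper's stated fact that the resolution contains infinitely many distinct projections.
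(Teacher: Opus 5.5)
Your proof is correct and is essentially the paper's: you extract a strictly increasing chain $(\alpha_n)$ with distinct projections, use continuity of the PRI at $\gamma=\sup_n\alpha_n$ to get $P_{\alpha_n}\to P_\gamma$ strongly, and account for the remainder $(\Id_X-P_\gamma)X$. The paper adjoins that remainder as a separate leading summand $E_0$, with a case split on whether $P_\gamma=\Id_X$; you merge it into $X_1$, which avoids the case split and is equally valid. One small caveat is that your $\omega_1$-based argument for infinitely many distinct projections only gives a contradiction when $\kappa=\omega_1$. The right argument runs at $\kappa$ itself: if the $P_\alpha$ with $\alpha<\kappa$ stabilized at some $\beta<\kappa$, continuity would give $X=P_\beta X$, whose density is at most $\max\{\aleph_0,|\beta|\}<\operatorname{dens}(X)$. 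This is what your fallback to the stated fact amounts to.
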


\begin{proof}
	We spell out the countable extraction from the classical PRI.  Choose a
	strictly increasing sequence of indices
	\[
	\alpha_1<\alpha_2<\cdots
	\]
	such that the corresponding projections are distinct, and put
	$\alpha=\sup_n\alpha_n$.  Such a sequence exists because the resolution is
	continuous and $X$ is nonseparable.  Let
	\[
	E_1=P_{\alpha_1}X,
	\qquad
	E_n=(P_{\alpha_n}-P_{\alpha_{n-1}})X\quad(n\geq2),
	\]
	discarding an initial zero range if necessary.  The $E_n$ are nonzero closed
	subspaces.  Continuity of the PRI gives
	\begin{equation}\label{eq:PRI-limit}
		P_{\alpha_n}x\longrightarrow P_\alpha x
		\qquad(x\in X).
	\end{equation}
	If $P_\alpha=\Id_X$, then $(E_n)_{n\in\mathbb{N}}$ is a Schauder decomposition: its
	$N$th partial-sum projection is $P_{\alpha_N}$, and
	\eqref{eq:PRI-limit} gives convergence to the identity.
	
	If $P_\alpha\neq\Id_X$, put
	\[
	E_0=(\Id_X-P_\alpha)X.
	\]
	Then $E_0$ is nonzero and
	\[
	X=E_0\oplus\overline{\operatorname{span}}\{E_n:n\in\mathbb{N}\}.
	\]
	For the ordered sequence $(E_0,E_1,E_2,\ldots)$, the partial-sum
	projections are
	\[
	S_N=\Id_X-P_\alpha+P_{\alpha_N}.
	\]
	They are uniformly bounded and, by \eqref{eq:PRI-limit}, satisfy
	$S_Nx\to x$ for every $x\in X$.  The commutation relation for the PRI makes
	the summands pairwise algebraically disjoint and gives uniqueness of the
	expansion.  Hence this sequence is again a Schauder decomposition.
\end{proof}

\begin{corollary}\label{cor:nonseparable-WCG-non-Lotz}
	Every nonseparable WCG Banach space is non-Lotz.
\end{corollary}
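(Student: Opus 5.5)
The corollary follows by chaining the two preceding results. Let $X$ be a nonseparable WCG Banach space. First, \cref{thm:nonseparable-WCG-decomposition} provides a Schauder decomposition $(X_n)_{n\in\N}$ of $X$, in the sense of \cref{def:Schauder-decomposition}: a sequence of nonzero closed subspaces with uniformly bounded partial-sum projections converging strongly to $\Id_X$. Second, \cref{prop:Schauder-non-Lotz} applies to any Banach space with such a decomposition. The diagonal semigroup
\[
T(t)=\sum_{n=1}^\infty e^{-nt}Q_n
\]
is a bounded $C_0$-semigroup whose generator satisfies $Au=-nu$ for every $u\in X_n$, so it is unbounded. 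Hence $X$ carries a $C_0$-semigroup that is not uniformly continuous, and $X$ is non-Lotz.

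The only point to check is that the hypotheses match. The decomposition from \cref{thm:nonseparable-WCG-decomposition} must consist of \emph{infinitely many nonzero} summands, because \cref{prop:Schauder-non-Lotz} needs the unit vectors $u_n\in X_n$ to witness $\|Au_n\|=n\to\infty$. In the proof of \cref{thm:nonseparable-WCG-decomposition}, the summands $E_n$ are nonzero because the chosen projections $P_{\alpha_n}$ are distinct. In the case $P_\alpha\neq\Id_X$, the extra summand $E_0$ is nonzero and is placed first; this ordering does not affect the argument in \cref{prop:Schauder-non-Lotz}, since we may simply relabel the summands as $X_1=E_0$, $X_2=E_1,\dots$.

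I expect no real obstacle here, since all the analytic work was done in the two earlier results. If anything needs care, it is the relabeling in the case $P_\alpha\neq\Id_X$. There one should confirm that the partial-sum projections $S_N=\Id_X-P_\alpha+P_{\alpha_N}$ are exactly the projections used in \cref{def:Schauder-decomposition}, and that their norms are bounded by $1+2\sup_\beta\|P_\beta\|$. Once that is confirmed, the decomposition constant $K_D$ is finite and \cref{prop:Schauder-non-Lotz} applies verbatim.
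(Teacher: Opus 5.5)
Your proposal is correct and is exactly the paper's argument: the corollary is obtained by applying \cref{thm:nonseparable-WCG-decomposition} and then \cref{prop:Schauder-non-Lotz}. Your extra checks are sound but not needed, since \cref{def:Schauder-decomposition} already requires infinitely many nonzero summands and gives $K_D<\infty$ via uniform boundedness. These checks are the nonzero summands, the relabeling with $E_0$ placed first (where $S_N$ is the $(N+1)$th partial sum after the index shift), and the bound $1+2\sup_\beta\|P_\beta\|$.
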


\begin{proof}
	Apply \cref{thm:nonseparable-WCG-decomposition} and then
	\cref{prop:Schauder-non-Lotz}.
\end{proof}

\section{Preliminaries}\label{SecPrelimin}

\subsection{Semigroups and their resolvents}

Let $(T(t))_{t\geq0}$ be a $C_0$-semigroup on $X$, with infinitesimal generator $A$.
As usual, $A$ is closed and densely defined.  Every $C_0$-semigroup satisfies
a growth estimate
\begin{equation}\label{eq:growth}
	\|T(t)\|\leq Me^{\omega t}
	\qquad(t\geq0)
\end{equation}
for some $M\geq1$ and $\omega\in\mathbb R$.  Its \emph{growth bound} is
\[
\omega_0(T)
:=
\inf\left\{
\nu\in\mathbb R:
\text{there is }M_\nu\geq1\text{ such that }
\|T(t)\|\leq M_\nu e^{\nu t}\text{ for every }t\geq0
\right\}.
\]

For a closed densely defined operator $A\colon\Dom(A)\subset X\to X$, its
\emph{resolvent set} is
\[
\rho(A)
:=
\left\{
\lambda\in\K:
\lambda\Id_X-A\colon\Dom(A)\to X
\text{ is bijective and has a bounded inverse}
\right\}.
\]
For $\lambda\in\rho(A)$, the operator
\[
R(\lambda,A):=(\lambda\Id_X-A)^{-1}
\]
is called the \emph{resolvent} of $A$ at $\lambda$.

Fix $M\geq1$ and $\omega\in\mathbb R$ satisfying
\eqref{eq:growth}.  If the real number $\lambda$ satisfies $\lambda>\omega$,
then $\lambda\in\rho(A)$ and
\begin{equation}\label{eq:laplace-resolvent}
	R(\lambda,A)x
	=\int_0^\infty e^{-\lambda t}T(t)x\,dt,
\end{equation}
where the integral is a Bochner integral.  In particular,
\begin{equation}\label{eq:resolvent-bound}
	\|R(\lambda,A)\|
	\leq\int_0^\infty e^{-\lambda t}\|T(t)\|\,dt
	\leq M\int_0^\infty e^{-(\lambda-\omega)t}\,dt
	=\frac{M}{\lambda-\omega}.
\end{equation}
These standard facts can be found, for example, in
\cite[Chapter~II]{EngelNagel2000}.

We shall use the Yosida approximants
\begin{equation}\label{eq:Yosida}
	J_\lambda:=\lambda R(\lambda,A),
	\qquad \lambda>\max\{\omega,0\}.
\end{equation}

\begin{lemma}[Yosida approximation]\label{lem:Yosida}
	For every $x\in X$,
	\[
	\lim_{\lambda\to\infty}J_\lambda x=x.
	\]
	Moreover, there is $\lambda_0>\max\{\omega,0\}$ such that
	\[
	\sup_{\lambda\geq\lambda_0}\|J_\lambda\|\leq 2M.
	\]
\end{lemma}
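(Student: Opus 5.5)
I will prove the two assertions in the order: uniform bound first, then strong convergence.

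\emph{Uniform bound.} By \eqref{eq:resolvent-bound},
\[
\|J_\lambda\|\leq \frac{M\lambda}{\lambda-\omega}
\qquad(\lambda>\max\{\omega,0\}).
\]
The right-hand side tends to $M$ as $\lambda\to\infty$. It is at most $2M$ exactly when $\lambda\geq 2\omega$, when $\omega>0$, and for every $\lambda>0$ when $\omega\leq0$. So I can take, for instance, $\lambda_0:=\max\{2\omega,0\}+1$. This gives $\sup_{\lambda\geq\lambda_0}\|J_\lambda\|\leq 2M$.

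\emph{Convergence on the domain.} Let $x\in\Dom(A)$. The resolvent identity $(\lambda\Id_X-A)R(\lambda,A)=\Id_X$, together with the fact that $R(\lambda,A)$ commutes with $A$ on $\Dom(A)$, gives
\[
J_\lambda x-x=\lambda R(\lambda,A)x-x=R(\lambda,A)Ax .
\]
Hence, again by \eqref{eq:resolvent-bound},
\[
\|J_\lambda x-x\|\leq \frac{M}{\lambda-\omega}\,\|Ax\|\longrightarrow0
\qquad(\lambda\to\infty).
\]

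\emph{Density and $\varepsilon/3$ argument.} Since $A$ is densely defined, for $x\in X$ and $\varepsilon>0$ I choose $y\in\Dom(A)$ with $\|x-y\|<\varepsilon$. Then, for $\lambda\geq\lambda_0$,
\[
\|J_\lambda x-x\|\leq \|J_\lambda(x-y)\|+\|J_\lambda y-y\|+\|y-x\|
\leq (2M+1)\varepsilon+\|J_\lambda y-y\|.
\]
The last term tends to zero by the previous step. Since $\varepsilon>0$ was arbitrary, $J_\lambda x\to x$.

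The main point needing care is the identity $\lambda R(\lambda,A)x-x=R(\lambda,A)Ax$ on $\Dom(A)$. It follows directly from applying $R(\lambda,A)$ to $(\lambda\Id_X-A)x$. An alternative route is the Laplace representation \eqref{eq:laplace-resolvent}:
\[
J_\lambda x-x=\int_0^\infty \lambda e^{-\lambda t}\bigl(T(t)x-x\bigr)\,dt .
\]
This integral is split at a small time $\delta$ and controlled using strong continuity; it also yields convergence for all $x\in X$ directly, without the density step. Everything else is routine.
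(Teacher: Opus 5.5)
Your proof is correct, but its main line differs from the paper's proof of this lemma. It is in fact exactly the argument the paper records later, in the remark ``An integral-free Yosida calculation''.

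The paper proves strong convergence first, and directly for every $x\in X$. It writes $J_\lambda x-x=\int_0^\infty\lambda e^{-\lambda t}(T(t)x-x)\,dt$ using the Laplace representation. It then splits the integral at a small $\delta$ given by strong continuity at $0$, and controls the tail with the growth bound $\|T(t)\|\le Me^{\omega t}$. Only afterwards does it derive $\|J_\lambda\|\le M\lambda/(\lambda-\omega)\le 2M$, so in its argument the two assertions are logically independent.

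Your route is algebraic. The identity $J_\lambda x-x=R(\lambda,A)Ax$ gives convergence on $\Dom(A)$, even with rate $O(1/\lambda)$. Density of $\Dom(A)$ together with the uniform bound then extends it to all of $X$, which is why you must prove the bound first.

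What each approach buys:
\begin{itemize}
\item Your version uses only the resolvent estimate $\|R(\lambda,A)\|\le M/(\lambda-\omega)$ and density of the domain. It therefore applies to any densely defined operator with such a resolvent bound, without referring to the semigroup itself.
\item The paper's version needs no density or uniform-boundedness step; it works straight from strong continuity.
\end{itemize}

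Two small remarks. The appeal to commutation of $R(\lambda,A)$ with $A$ is unnecessary: $R(\lambda,A)(\lambda\Id_X-A)x=x$ for $x\in\Dom(A)$ simply because $R(\lambda,A)$ is the inverse of $\lambda\Id_X-A\colon\Dom(A)\to X$. Your choice $\lambda_0=\max\{2\omega,0\}+1$ works just as well as the paper's $\max\{1,2\max\{\omega,0\}\}$.
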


\begin{proof}
	Since $\int_0^\infty\lambda e^{-\lambda t}\,dt=1$, formula
	\eqref{eq:laplace-resolvent} gives
	\begin{equation}\label{eq:Yosida-difference}
		\begin{aligned}
			J_\lambda x-x
			&=\int_0^\infty\lambda e^{-\lambda t}T(t)x\,dt
			-\int_0^\infty\lambda e^{-\lambda t}x\,dt\\
			&=\int_0^\infty\lambda e^{-\lambda t}
			\bigl(T(t)x-x\bigr)\,dt.
		\end{aligned}
	\end{equation}
	Fix $\varepsilon>0$.  Strong continuity at zero provides $\delta>0$ such
	that
	\[
	\|T(t)x-x\|<\frac{\varepsilon}{2}
	\qquad(0\leq t\leq\delta).
	\]
	The part of \eqref{eq:Yosida-difference} over $[0,\delta]$ is therefore at
	most
	\[
	\frac{\varepsilon}{2}
	\int_0^\delta\lambda e^{-\lambda t}\,dt
	\leq\frac{\varepsilon}{2}.
	\]
	Using \eqref{eq:growth}, the remaining part is bounded by
	\begin{align*}
		&\int_\delta^\infty\lambda e^{-\lambda t}
		\bigl(\|T(t)x\|+\|x\|\bigr)\,dt\\
		&\quad\leq
		M\|x\|\lambda\int_\delta^\infty
		e^{-(\lambda-\omega)t}\,dt
		+\|x\|\lambda\int_\delta^\infty e^{-\lambda t}\,dt\\
		&\quad=
		\|x\|\left(
		M\frac{\lambda}{\lambda-\omega}
		e^{-(\lambda-\omega)\delta}
		+e^{-\lambda\delta}
		\right),
	\end{align*}
	which tends to zero as $\lambda\to\infty$.  This proves the strong
	convergence.
	
	Finally, by \eqref{eq:resolvent-bound},
	\begin{equation}\label{eq:Yosida-bound}
		\|J_\lambda\|
		\leq M\frac{\lambda}{\lambda-\omega}.
	\end{equation}
	If $\omega\leq0$, the last quotient is at most $1$ for every $\lambda>0$.
	If $\omega>0$, it is at most $2$ whenever $\lambda\geq2\omega$.
	Choosing, for instance,
	\[
	\lambda_0=\max\{1,2\max\{\omega,0\}\}
	\]
	proves the asserted estimate.
\end{proof}

Recall that a $C_0$-semigroup is uniformly continuous if and only if its
generator is bounded and everywhere defined
\cite[Theorem~I.3.7]{EngelNagel2000}.  Hence the Lotz property is equivalent
to the boundedness of every $C_0$-semigroup generator.

\subsection{Compact approximation properties}

\begin{definition}\label{def:CAP}
	A Banach space $X$ has the \emph{compact approximation property} (CAP) if,
	for every compact set $C\subset X$ and every $\varepsilon>0$, there is
	$K\in\Kop(X)$ such that
	\[
	\sup_{x\in C}\|Kx-x\|<\varepsilon.
	\]
	It has the \emph{bounded compact approximation property} (BCAP) if there is
	$C\geq1$ such that the compact approximants may always be chosen with
	$\|K\|\leq C$.  If the bound can be taken equal to $1$, then $X$ has the
	\emph{metric compact approximation property} (MCAP).
	
	Replacing compact operators by finite-rank operators gives the
	\emph{approximation property} (AP).  If the finite-rank approximants can be
	chosen with a common norm bound, one obtains the \emph{bounded approximation
		property} (BAP).
\end{definition}

The following elementary lemma will be used to pass from strong convergence
to uniform convergence on compact subsets.

\begin{lemma}\label{lem:strong-to-compact}
	Let $(L_\alpha)$ be a net in $\Lop(X)$ such that
	\[
	B:=\sup_\alpha\|L_\alpha\|<\infty
	\quad\text{and}\quad
	L_\alpha x\longrightarrow x
	\quad(x\in X).
	\]
	Then $L_\alpha\to\Id_X$ uniformly on each compact subset of $X$.
	Consequently, if every $L_\alpha$ is compact, then $X$ has the BCAP with
	constant at most $B$.
\end{lemma}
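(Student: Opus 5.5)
The plan is a standard $\varepsilon/3$-argument combined with total boundedness of compact sets. Fix a compact set $C\subset X$ and $\varepsilon>0$. Since $C$ is totally bounded, we can choose finitely many points $x_1,\dots,x_k\in C$ such that every $x\in C$ satisfies $\|x-x_j\|<\delta$ for some $j$. The radius is
\[
\delta:=\frac{\varepsilon}{3(B+1)}.
\]
By the pointwise convergence hypothesis and the directedness of the index set, there is a single index $\alpha_0$ such that
\[
\|L_\alpha x_j-x_j\|<\frac{\varepsilon}{3}
\]
for all $\alpha\succeq\alpha_0$ and all $j=1,\dots,k$. We need directedness to find one index dominating the finitely many indices chosen for the individual $x_j$.

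For $x\in C$ and $\alpha\succeq\alpha_0$, pick $j$ with $\|x-x_j\|<\delta$ and estimate
\begin{align*}
\|L_\alpha x-x\|
&\leq\|L_\alpha(x-x_j)\|+\|L_\alpha x_j-x_j\|+\|x_j-x\|\\
&\leq (B+1)\delta+\frac{\varepsilon}{3}<\varepsilon.
\end{align*}
The bound is uniform in $x\in C$, so $\sup_{x\in C}\|L_\alpha x-x\|<\varepsilon$ for all $\alpha\succeq\alpha_0$. This is the uniform convergence on compact sets. The uniform bound $B$ is exactly what lets us pass from the finite net of points to all of $C$; this step is the only real content of the proof.

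For the consequence, suppose every $L_\alpha$ is compact. Given a compact set $C$ and $\varepsilon>0$, take $K:=L_{\alpha_0}$ as above. Then $K\in\Kop(X)$, $\|K\|\leq B$, and $\sup_{x\in C}\|Kx-x\|<\varepsilon$. This is precisely the BCAP of \cref{def:CAP} with constant $B$. The definition requires a constant $C\geq1$, so if $B<1$ we replace it by $\max\{B,1\}$. (If $X\neq\{0\}$, then $B\geq1$ automatically, because $\|L_\alpha x\|\to\|x\|$ for any $x\neq0$.) There is no genuine obstacle; the points needing care are using directedness to choose one common index for the finitely many centres, and handling the trivial normalisation of the constant.
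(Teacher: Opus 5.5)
Your proof is correct and is essentially the paper's own $\varepsilon/3$ argument, even using the same radius $\varepsilon/(3(B+1))$ for the finite net. Your remarks on directedness, which gives a common index for the finitely many centres, and on $B\geq1$ when $X\neq\{0\}$ are harmless refinements that the paper leaves implicit.
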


\begin{proof}
	Let $C\subset X$ be compact and let $\varepsilon>0$.  Put
	\[
	\eta:=\frac{\varepsilon}{3(B+1)}.
	\]
	Choose $x_1,\ldots,x_N\in C$ such that
	\[
	C\subset\bigcup_{j=1}^N B(x_j,\eta).
	\]
	For all sufficiently large $\alpha$,
	\[
	\|L_\alpha x_j-x_j\|<\frac{\varepsilon}{3}
	\qquad(j=1,\ldots,N).
	\]
	If $x\in C$ and $\|x-x_j\|<\eta$, then
	\begin{align*}
		\|L_\alpha x-x\|
		&\leq \|L_\alpha(x-x_j)\|
		+\|L_\alpha x_j-x_j\|+\|x_j-x\|\\
		&<B\eta+\frac{\varepsilon}{3}+\eta
		=\frac{\varepsilon}{3}+\frac{\varepsilon}{3}
		<\varepsilon.
	\end{align*}
	Thus the convergence is uniform on $C$.  If the $L_\alpha$ are compact,
	they are uniformly bounded compact approximants of the identity.
\end{proof}

Since finite-rank operators are compact, the approximation property implies
the CAP, and the BAP implies the BCAP.  The CAP need not imply the AP, even
when it holds metrically \cite{Willis1992}.  Our abstract criterion only
requires failure of the BCAP; the extraction theorem used later supplies the
stronger failure of the CAP.

\subsection{Strictly singular operators and Fredholm theory}

For the purposes of this paper, a linear subspace
$\mathcal J(X)\subset\Lop(X)$ is called a \emph{two-sided operator ideal on
	$X$} if
\[
ASB\in\mathcal J(X)
\qquad(A,B\in\Lop(X),\ S\in\mathcal J(X)).
\]

An operator $S\in\Lop(X)$ is strictly singular if no restriction of $S$ to
an infinite-dimensional closed subspace of $X$ is an isomorphism onto its
range.  The class $\SSop(X)$ is a closed two-sided operator ideal on $X$.
An operator $F$ is
\emph{Fredholm} if $\ker F$ is finite-dimensional, $\ran F$ is closed, and
$\ran F$ has finite codimension; its index is
\[
\ind F:=\dim\ker F-\codim\ran F.
\]
We use the standard Fredholm fact
\begin{equation}\label{eq:fredholm-fact}
	\alpha\Id_X+S\text{ is Fredholm of index }0
	\quad
	(\alpha\in\K\setminus\{0\},\ S\in\SSop(X)).
\end{equation}
This follows from the inessentiality of strictly singular operators; see, for
example, \cite[Chapter~5]{Pietsch1980} or
\cite[Section~IV.5]{Kato1995}.

The \emph{Calkin algebra} of $X$ is the unital Banach algebra
\begin{equation}\label{eq:Calkin}
	\Cal(X):=\Lop(X)/\Kop(X),
\end{equation}
and $\pi_X\colon\Lop(X)\to\Cal(X)$ denotes the quotient homomorphism.
Atkinson's theorem says that
\begin{equation}\label{eq:Atkinson}
	F\text{ is Fredholm}
	\quad\Longleftrightarrow\quad
	\pi_X(F)\text{ is invertible in }\Cal(X).
\end{equation}
We shall use only this equivalence; see, for example,
\cite[Section~IV.5]{Kato1995}.

If $A$ is the generator of a $C_0$-semigroup, then
\begin{equation}\label{eq:resolvent-range}
	R(\lambda,A):X\longrightarrow X
	\quad\text{is injective and}\quad
	\ran R(\lambda,A)=\Dom(A).
\end{equation}
If $A$ is unbounded, then $\Dom(A)\neq X$: otherwise the closed graph
theorem, applied to the closed operator $A:X\to X$, would imply that $A$ is
bounded.

\section{A Calkin-algebra criterion for the Lotz property}\label{SecCalkinAlg}

The next result isolates the operator-algebraic core of the argument.  An
element of a unital algebra is called \emph{noninvertible} if it has no
two-sided inverse.

\begin{theorem}[Calkin criterion]\label{thm:criterion}
	Let $X$ be a Banach space which fails the BCAP.  Suppose that there is
	$m\in\N$ such that
	\begin{equation}\label{eq:Calkin-nilpotence}
		a^m=0
		\qquad\text{for every noninvertible }a\in\Cal(X).
	\end{equation}
	Then $X$ has the Lotz property.
\end{theorem}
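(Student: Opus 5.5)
We argue by contradiction and follow the mechanism sketched in the introduction. Suppose $X$ fails the Lotz property. Then, by the remark after \cref{lem:Yosida}, there is a $C_0$-semigroup $(T(t))_{t\geq0}$ with unbounded generator $A$. Fix $M,\omega$ as in \eqref{eq:growth}, and take $\lambda_0$ from \cref{lem:Yosida}. For real $\lambda\geq\lambda_0$ the resolvent $R(\lambda,A)$ is bounded and injective, and by \eqref{eq:resolvent-range} its range is $\Dom(A)$. Since $A$ is unbounded, the closed graph remark gives $\Dom(A)\neq X$.

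The first step is to show that $R(\lambda,A)$ is not Fredholm. We cannot conclude this merely from $\Dom(A)\neq X$, since a proper range may still have finite codimension. Instead we use density: $\Dom(A)$ is dense in $X$, being the domain of a generator. A Fredholm operator has closed range, and a closed dense subspace is all of $X$. So if $R(\lambda,A)$ were Fredholm, its range would equal $X$, contradicting $\Dom(A)\neq X$. Hence, by Atkinson's theorem \eqref{eq:Atkinson}, $\pi_X(R(\lambda,A))$ is noninvertible in $\Cal(X)$. The same holds for $\pi_X(J_\lambda)=\lambda\,\pi_X(R(\lambda,A))$, since $\lambda>0$.

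The second step applies hypothesis \eqref{eq:Calkin-nilpotence}. It gives $\pi_X(J_\lambda)^m=\pi_X(J_\lambda^m)=0$, that is, $J_\lambda^m\in\Kop(X)$ for every $\lambda\geq\lambda_0$.

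The third step checks that these powers approximate the identity boundedly and strongly. By \cref{lem:Yosida}, $\|J_\lambda^m\|\leq(2M)^m$ for $\lambda\geq\lambda_0$. Strong convergence $J_\lambda^m x\to x$ follows by induction on $m$ from the telescoping identity
\[
J_\lambda^m x-x=J_\lambda\bigl(J_\lambda^{m-1}x-x\bigr)+\bigl(J_\lambda x-x\bigr).
\]
Here the first term has norm at most $2M\|J_\lambda^{m-1}x-x\|$, and the second term tends to zero by \cref{lem:Yosida}. We then apply \cref{lem:strong-to-compact} to the net $(J_\lambda^m)_{\lambda\geq\lambda_0}$, directed by $\lambda\to\infty$. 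This yields the BCAP for $X$ with constant at most $(2M)^m$, contradicting the hypothesis. Hence every generator is bounded, and $X$ has the Lotz property.

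The main obstacle is the non-Fredholmness in the first step. It is the only point where the unboundedness of $A$ enters, and the density-plus-closed-range argument above is the way to settle it. We should also check that $\lambda$ can be chosen real and positive in both scalar cases, which is fine because we only use real $\lambda$. The remaining steps are routine.
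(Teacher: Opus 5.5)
Your proof is correct and follows essentially the same route as the paper: $R(\lambda,A)$ is non-Fredholm because $\Dom(A)$ is dense and a Fredholm range is closed, and then Atkinson's theorem and the nilpotence hypothesis make $J_\lambda^m$ compact, after which \cref{lem:strong-to-compact} yields the BCAP. The only cosmetic difference is that you get $J_\lambda^m x\to x$ by induction from a telescoping identity, whereas the paper uses the factorization $J_\lambda^m-\Id_X=(\Id_X+J_\lambda+\cdots+J_\lambda^{m-1})(J_\lambda-\Id_X)$.
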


\begin{proof}
	Suppose that a $C_0$-semigroup $(T(t))_{t\geq0}$ on $X$ has an unbounded infinitesimal
	generator $A$.  Choose $M\geq1$ and $\omega\in\mathbb R$ that
	satisfy \eqref{eq:growth}, and let $\lambda>\max\{\omega,0\}$.  The resolvent
	$R(\lambda,A)$ is injective and has range $\Dom(A)$ by
	\eqref{eq:resolvent-range}.  If it were Fredholm, then its range would be
	closed.  Since $\Dom(A)$ is dense, this would give $\Dom(A)=X$, and the
	closed graph theorem would make $A$ bounded, a contradiction.  Hence
	$R(\lambda,A)$ is not Fredholm.
	
	By Atkinson's theorem, $\pi_X(R(\lambda,A))$ is noninvertible in
	$\Cal(X)$.  Assumption \eqref{eq:Calkin-nilpotence} therefore gives
	\[
	\pi_X(R(\lambda,A)^m)=\pi_X(R(\lambda,A))^m=0,
	\]
	so $R(\lambda,A)^m$ is compact.  The Yosida approximant
	$J_\lambda=\lambda R(\lambda,A)$ consequently satisfies
	\begin{equation}\label{eq:Yosida-power-compact}
		J_\lambda^m\in\Kop(X).
	\end{equation}
	
	Choose $\lambda_0$ as in \cref{lem:Yosida} and set
	\[
	C:=\sup_{\lambda\geq\lambda_0}\|J_\lambda\|\leq2M.
	\]
	Then $\|J_\lambda^m\|\leq C^m$.  Moreover,
	\begin{equation}\label{eq:power-factorization}
		J_\lambda^m-\Id_X
		=\bigl(\Id_X+J_\lambda+\cdots+J_\lambda^{m-1}\bigr)
		(J_\lambda-\Id_X).
	\end{equation}
	For every $x\in X$, the norm of the right-hand side is at most
	\[
	(1+C+\cdots+C^{m-1})\|(J_\lambda-\Id_X)x\|,
	\]
	which tends to zero by \cref{lem:Yosida}.  Thus the compact operators
	$J_\lambda^m$ are uniformly bounded and converge strongly to $\Id_X$.
	By \cref{lem:strong-to-compact}, $X$ has the BCAP, contradicting the
	hypothesis.  Therefore every $C_0$-semigroup generator on $X$ is bounded.
\end{proof}

The criterion has a particularly transparent few-operators consequence.

\begin{corollary}[Scalar-plus-strictly-singular criterion]
	\label{cor:few-operators-criterion}
	Suppose that
	\begin{equation}\label{eq:scalar-plus-SS}
		\Lop(X)=\K\Id_X+\SSop(X)
	\end{equation}
	and that, for some $m\in\N$,
	\begin{equation}\label{eq:SS-power-compact}
		S^m\in\Kop(X)\qquad(S\in\SSop(X)).
	\end{equation}
	If $X$ fails the BCAP, then $X$ has the Lotz property.
\end{corollary}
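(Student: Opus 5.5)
The plan is to reduce the corollary to \cref{thm:criterion} by verifying the Calkin-algebra nilpotence hypothesis \eqref{eq:Calkin-nilpotence} with the same $m$ given in \eqref{eq:SS-power-compact}. Since $X$ fails the BCAP by assumption, once we know that every noninvertible $a\in\Cal(X)$ satisfies $a^m=0$, the theorem yields the Lotz property at once.

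Fix a noninvertible $a\in\Cal(X)$ and choose $T\in\Lop(X)$ with $\pi_X(T)=a$. By \eqref{eq:scalar-plus-SS} we may write $T=\alpha\Id_X+S$ with $\alpha\in\K$ and $S\in\SSop(X)$.

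\emph{Case $\alpha\neq0$.} Then $T$ is Fredholm by \eqref{eq:fredholm-fact}. By Atkinson's theorem \eqref{eq:Atkinson}, $a=\pi_X(T)$ is invertible in $\Cal(X)$, contradicting the choice of $a$.

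\emph{Case $\alpha=0$.} This is the only remaining possibility. Here $T=S$ is strictly singular, so $S^m\in\Kop(X)$ by \eqref{eq:SS-power-compact}. Since $\pi_X$ is a homomorphism, $a^m=\pi_X(S)^m=\pi_X(S^m)=0$.

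Thus \eqref{eq:Calkin-nilpotence} holds, and \cref{thm:criterion} finishes the proof.

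There is no serious obstacle; the one point needing care is the Fredholm step. The fact \eqref{eq:fredholm-fact} must be applied to an arbitrary representative, which is legitimate because the decomposition \eqref{eq:scalar-plus-SS} applies to every bounded operator. The argument also does not depend on uniqueness of $\alpha$: if $\Id_X$ were strictly singular, $X$ would be finite-dimensional and hence have the BCAP, so that situation is excluded by hypothesis anyway. Note that only powers of a single strictly singular operator are used, consistent with the remark after \eqref{eq:scalar-plus-ss-intro}.
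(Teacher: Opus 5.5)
Your proof is correct and has the same overall shape as the paper's. You write a noninvertible $a$ as $\alpha 1+\pi_X(S)$, rule out $\alpha\neq0$, conclude $a^m=\pi_X(S^m)=0$, and apply \cref{thm:criterion}.

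The difference is in how you rule out $\alpha\neq0$. You invoke the perturbation fact \eqref{eq:fredholm-fact} together with Atkinson's theorem \eqref{eq:Atkinson}. The paper instead uses the corollary's own hypothesis \eqref{eq:SS-power-compact}: $q:=\pi_X(S)$ satisfies $q^m=0$, so $\alpha 1+q$ has the explicit inverse $\alpha^{-1}\sum_{j=0}^{m-1}(-\alpha^{-1}q)^j$.

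The paper's route is purely algebraic inside $\Cal(X)$ and never needs the inessentiality of strictly singular operators. In fact \eqref{eq:fredholm-fact} ends up unused anywhere in the paper. Your route depends on that external theorem, but it yields a little more. Under \eqref{eq:scalar-plus-SS} alone, with no power-compactness, every noninvertible element of $\Cal(X)$ lies in $\pi_X(\SSop(X))$; the power condition is only needed for the final nilpotence step.

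Your remark on non-uniqueness of $\alpha$ is right but unnecessary in either version, since the case analysis only ever uses one chosen representation.
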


\begin{proof}
	Every $a\in\Cal(X)$ has the form
	\[
	a=\alpha 1_{\Cal(X)}+\pi_X(S)
	\qquad(\alpha\in\K,\ S\in\SSop(X)).
	\]
	By \eqref{eq:SS-power-compact}, $q:=\pi_X(S)$ satisfies $q^m=0$.  If
	$\alpha\neq0$, then $a$ is invertible, with inverse
	\[
	a^{-1}=\alpha^{-1}
	\sum_{j=0}^{m-1}(-\alpha^{-1}q)^j.
	\]
	Thus every noninvertible $a$ has $\alpha=0$ and hence $a^m=q^m=0$.
	Apply \cref{thm:criterion}.
\end{proof}

\begin{remark}\label{rem:products-versus-powers}
	The frequently occurring hypothesis that every product of $m$ strictly
	singular operators is compact implies \eqref{eq:SS-power-compact} by taking
	all factors equal.  The converse need not be assumed for the argument above.
	The Calkin formulation also shows exactly what is used: uniform nilpotence of
	the noninvertible part of the quotient algebra.
\end{remark}

\section{The Argyros--Motakis space and the extraction of a subspace}\label{SecArgyros-Motakis}

We recall the precise ingredients needed from the construction of Argyros
and Motakis.

If $Y$ and $Z$ are closed subspaces of a Banach space, then $Y+Z$ is called
a \emph{topological direct sum}, denoted by $Y\oplus Z$, if every element of
$Y+Z$ has a unique representation $y+z$, with $y\in Y$ and $z\in Z$, and
$Y+Z$ is closed.  Equivalently, the associated coordinate projections are
bounded.

A Banach space $X$ is \emph{hereditarily indecomposable} (HI) if no
infinite-dimensional closed subspace of $X$ is a topological direct sum of
two infinite-dimensional closed subspaces.  A sequence $(e_n)_{n\in\mathbb{N}}$ is a
\emph{basic sequence} if it is a Schauder basis for its closed linear span
\[
[e_n]:=\overline{\operatorname{span}}\{e_n:n\in\N\}.
\]
A bounded sequence $(x_n)$ in $X$ \emph{generates} a basic sequence $(e_n)$
as a spreading model if, for
every $k\in\N$ and every $(a_1,\ldots,a_k)\in[-1,1]^k$,
\[
\lim_{\substack{n_1\to\infty\\ n_1<\cdots<n_k}}
\left\|\sum_{j=1}^ka_jx_{n_j}\right\|
=\left\|\sum_{j=1}^ka_je_j\right\|,
\]
with uniform convergence in the coefficient cube $[-1,1]^k$.  Finally, a
sequence $(x_n)_{n\in\mathbb{N}}$ is \emph{weakly null} if $x^*(x_n)\to0$ for every
$x^*\in X^*$.

\begin{theorem}[Argyros--Motakis]\label{thm:AM}
	There is a separable reflexive real Banach space
	$\mathfrak X_{\mathrm{ISP}}$ with a Schauder basis such that:
	\begin{enumerate}[label=\textup{(\roman*)}]
		\item $\mathfrak X_{\mathrm{ISP}}$ is hereditarily indecomposable;
		\item every infinite-dimensional closed subspace
		$Y\subset\mathfrak X_{\mathrm{ISP}}$ admits a normalized weakly null
		sequence generating the unit vector basis of $\ell_1$ as a spreading
		model;
		\item for every infinite-dimensional closed subspace
		$Y\subset\mathfrak X_{\mathrm{ISP}}$ and every $T\in\Lop(Y)$, there
		are $\alpha\in\K$ and $S\in\SSop(Y)$ such that
		\[
		T=\alpha\Id_Y+S;
		\]
		\item if $Q,S,T\in\SSop(Y)$, then $QST\in\Kop(Y)$.
	\end{enumerate}
\end{theorem}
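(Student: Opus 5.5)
This statement is a compilation of results from the Argyros--Motakis paper \cite{ArgyrosMotakis2014}, not something we derive from earlier material here. So the plan is to match each clause with the corresponding theorem there, and to check that the hereditary versions in (ii)--(iv) follow from what is actually proved. Throughout, $\K=\mathbb R$.

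First, existence of $\mathfrak X_{\mathrm{ISP}}$ together with a Schauder basis and reflexivity. The space is built by a saturation method with constraints on a mixed Tsirelson norming set, and the unit vector basis is a Schauder basis by construction. Reflexivity follows from two facts: the basis is shrinking (no $\ell_1$ subspace, which follows from the HI property and the asymptotic structure) and boundedly complete (no $c_0$). I would cite these directly rather than reprove them.

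Clause (i), hereditary indecomposability, is the standard output of the rapidly increasing sequence/dependent sequence machinery. For any two block subspaces one builds a dependent sequence whose alternating sum has norm much smaller than its plain sum. This shows that the distance between the unit spheres of the two subspaces is zero, which is the HI property.

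Clause (ii) uses the ``$\ell_1$ spreading model in every subspace'' statement. Every block subspace contains a normalized block sequence generating an $\ell_1$ spreading model; this is the defining ``ISP'' feature. It passes to arbitrary infinite-dimensional closed subspaces $Y$ by a standard perturbation. Using the basis, $Y$ contains a small perturbation of a block sequence, so the $\ell_1$ spreading model transfers to a normalized sequence in $Y$. The sequence is weakly null by reflexivity, since bounded block sequences in a space with a shrinking basis are weakly null.

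For clause (iii), Argyros--Motakis prove the scalar-plus-strictly-singular property on every subspace. Given $T\in\Lop(Y)$, one shows that $\operatorname{dist}(Ty,\K y)\to0$ along suitable rapidly increasing sequences. This yields a unique $\alpha$ such that $T-\alpha\Id_Y$ is strictly singular, because otherwise $T-\alpha\Id_Y$ would be an isomorphism on some block-type subspace, contradicting the estimates on exact pairs.

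Clause (iv) is the main obstacle and the most delicate input. The compactness of triple products of strictly singular operators on subspaces is the distinctive result of that paper. The argument I would follow in citing it has three steps. First, a strictly singular operator on $Y$ maps every weakly null sequence with an $\ell_1$ spreading model to a sequence whose spreading models are ``smaller''. Second, iterating three times lands in sequences that must be norm null. Third, by reflexivity, compactness is equivalent to sending weakly null sequences to norm-null sequences.

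I would state these as imported results with precise theorem references and not reproduce the combinatorics.
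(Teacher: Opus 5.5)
Your approach is the same as the paper's. The statement is imported from the main theorem of \cite{ArgyrosMotakis2014}, and the only step the paper spells out is that their scalar-plus-strictly-singular clause is stated for operators $Y\to\mathfrak X_{\mathrm{ISP}}$; one composes $T\in\Lop(Y)$ with the isometric inclusion of $Y$ and uses that $T-\alpha\Id_Y$ is strictly singular exactly when its composite with the inclusion is. You should drop your parenthetical reasons for reflexivity, because ``no $\ell_1\Rightarrow$ shrinking'' and ``no $c_0\Rightarrow$ boundedly complete'' are James's criteria for unconditional bases and are unavailable here: an HI space never contains $c_0$ or $\ell_1$, yet $\mathfrak X_{\mathcal U}$ in \cref{thm:AM-dual} is HI with a Schauder basis and is not reflexive.
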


\begin{proof}[Source]
	These are items (i), (ii), (iii) and (v), respectively, of the main theorem
	of Argyros and Motakis \cite{ArgyrosMotakis2014}.  Their item (iii) is stated
	in the stronger form that every operator from $Y$ into
	$\mathfrak X_{\mathrm{ISP}}$ is a scalar multiple of the inclusion plus a
	strictly singular operator.  Restricting the codomain to $Y$ gives the form
	used here.
\end{proof}

We next explain rigorously why the spreading-model information gives access
to the Szankowski extraction theorem.

For $j\in\N$, the $j$th \emph{Rademacher function} on $[0,1]$ is
\[
r_j(t):=\operatorname{sgn}\bigl(\sin(2^j\pi t)\bigr),
\]
where we set $r_j(t)=1$ at the finitely many zeros of the sine.  Thus $r_j$
takes only the values $-1$ and $1$.

\begin{definition}
	Let $1\leq p\leq2$.  A Banach space $X$ has \emph{Rademacher type $p$} if
	there is a constant $T_p(X)<\infty$ such that, for every
	$x_1,\ldots,x_n\in X$,
	\begin{equation}\label{eq:typep}
		\left(
		\int_0^1
		\left\|\sum_{j=1}^n r_j(t)x_j\right\|^p\,dt
		\right)^{1/p}
		\leq T_p(X)
		\left(\sum_{j=1}^n\|x_j\|^p\right)^{1/p},
	\end{equation}
	where $(r_j)$ denotes the sequence of Rademacher functions.
	Type $p$ is called \emph{nontrivial} when $p>1$.
	By the Kahane--Khintchine inequalities, this formulation is equivalent to
	the customary definition with the second moment on the left-hand side.
\end{definition}

\begin{lemma}\label{lem:l1-no-type}
	If a Banach space $E$ admits a normalized sequence generating the unit vector
	basis of $\ell_1$ as a spreading model, then $E$ has no Rademacher type
	$p>1$.
\end{lemma}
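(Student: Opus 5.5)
We argue by contradiction. Suppose $E$ has Rademacher type $p$ for some $p>1$, with constant $T_p(E)$, and let $(x_n)$ be a normalized sequence in $E$ generating the unit vector basis $(e_j)$ of $\ell_1$ as a spreading model. The plan is to test the type inequality \eqref{eq:typep} on long blocks $x_{n_1},\ldots,x_{n_k}$ with indices far out, where the spreading model forces every signed sum to have norm close to $k$. The right-hand side of \eqref{eq:typep} is only $k^{1/p}$, which is much smaller than $k$ when $p>1$.

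First fix $k\in\N$ and $\varepsilon>0$. By the definition of the spreading model, the convergence is uniform on the coefficient cube $[-1,1]^k$. Hence there is $N$ such that for all $N\le n_1<\cdots<n_k$ and every sign vector $(\epsilon_1,\ldots,\epsilon_k)\in\{-1,1\}^k\subset[-1,1]^k$,
\[
\left\|\sum_{j=1}^k\epsilon_jx_{n_j}\right\|\ \ge\ \left\|\sum_{j=1}^k\epsilon_je_j\right\|_{\ell_1}-\varepsilon\ =\ k-\varepsilon .
\]
Only the finitely many sign vectors are needed here, so even pointwise convergence would suffice for this step.

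Next, apply \eqref{eq:typep} to $x_{n_1},\ldots,x_{n_k}$. For every $t\in[0,1]$ the vector $(r_1(t),\ldots,r_k(t))$ is a sign vector, so the integrand is at least $(k-\varepsilon)^p$ pointwise. This gives
\[
k-\varepsilon\ \le\ \left(\int_0^1\left\|\sum_{j=1}^kr_j(t)x_{n_j}\right\|^pdt\right)^{1/p}\ \le\ T_p(E)\,k^{1/p},
\]
using $\|x_{n_j}\|=1$. Letting $\varepsilon\downarrow0$ yields $k^{1-1/p}\le T_p(E)$ for every $k$. Since $1-1/p>0$, this is impossible as $k\to\infty$.

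No step is a real obstacle. The one point needing care is that the Rademacher signs $r_j(t)$ take values exactly $\pm1$ (the paper's convention at the zeros of the sine guarantees this). Therefore the lower bound from the spreading model holds for every $t$, not merely on average.
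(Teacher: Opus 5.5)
Your proof is correct and follows essentially the same route as the paper. The paper likewise uses the uniform spreading-model convergence on the finitely many sign vectors to bound every signed sum of far-out terms from below, though with the cruder constant $N/2$ instead of your $k-\varepsilon$. It then inserts the Rademacher signs into the type-$p$ inequality to reach the contradiction $N^{1-1/p}\le 2T_p(E)$ for all $N$.
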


\begin{proof}
	Let $(x_n)_{n\in\mathbb{N}}$ be such a normalized sequence.  From the definition of spreading
	model, for every $N\in\N$ one may choose indices
	\[
	k_1<k_2<\cdots<k_N
	\]
	sufficiently far out so that, simultaneously for every choice of signs
	$\varepsilon_1,\ldots,\varepsilon_N\in\{-1,1\}$,
	\begin{equation}\label{eq:l1-lower}
		\left\|\sum_{j=1}^N\varepsilon_jx_{k_j}\right\|
		\geq\frac12
		\left\|\sum_{j=1}^N\varepsilon_je_j\right\|_{\ell_1}
		=\frac{N}{2}.
	\end{equation}
	Indeed, for the fixed length $N$, the spreading-model convergence is uniform
	on the compact coefficient cube $[-1,1]^N$; it therefore applies in
	particular to the finite set $\{-1,1\}^N$.
	
	Assume that $E$ has type $p$ for some $p>1$.  Applying
	\eqref{eq:typep} to $x_{k_1},\ldots,x_{k_N}$, and using
	\eqref{eq:l1-lower} for the signs
	$\varepsilon_j=r_j(t)$, gives
	\begin{align*}
		\frac{N}{2}
		&\leq
		\left(
		\int_0^1
		\left\|\sum_{j=1}^N r_j(t)x_{k_j}\right\|^p\,dt
		\right)^{1/p}\\
		&\leq T_p(E)
		\left(\sum_{j=1}^N\|x_{k_j}\|^p\right)^{1/p}\\
		&=T_p(E)N^{1/p}.
	\end{align*}
	Thus
	\[
	N^{1-1/p}\leq2T_p(E)
	\qquad(N\in\N),
	\]
	which is impossible because $1-1/p>0$.  Hence $E$ has no type $p>1$.
\end{proof}

We use the following classical extraction consequence.

\begin{theorem}[Maurey--Pisier--Szankowski]\label{thm:MPS}
	Let $E$ be a separable Banach space.  If, for some $0<\varepsilon<1$, the
	space $E$ fails to have Rademacher type $2-\varepsilon$, then $E$ contains a
	closed infinite-dimensional subspace without the compact approximation
	property.
\end{theorem}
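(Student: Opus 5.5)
This statement is the classical Szankowski/Maurey--Pisier circle of results, so the plan is to combine two known theorems rather than build anything from scratch.

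\textbf{Step 1 (finite-dimensional $\ell_\infty^n$'s are absent, so $\ell_1^n$'s appear).} Since $E$ fails type $2-\varepsilon<2$, the Maurey--Pisier theorem gives that $\ell_{p_E}^n$ are uniformly contained in $E$, where $p_E$ is the supremum of types. Here $p_E\le 2-\varepsilon<2$. Either $E$ fails nontrivial cotype, in which case $\ell_\infty^n$'s are uniformly contained, or $E$ has some finite cotype. In either case $E$ contains $\ell_p^n$'s uniformly, $(1+\delta)$-isomorphically by Krivine-type refinements, for some $p\neq 2$. In the first case take $p=\infty$, which also yields every $\ell_q^n$ for $q\ne 2$ finite. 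In the second case $p=p_E<2$.

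\textbf{Step 2 (Szankowski's construction).} Szankowski's theorem, in the form extended to the CAP, states the following. If $E$ contains uniformly complemented-free copies of $\ell_p^n$ for some $p\neq2$ with uniformly bounded isomorphism constants, then $E$ has a subspace failing the approximation property. The CAP refinement is due to Szankowski's method as adapted by Johnson, and to Lindenstrauss--Tzafriri, Vol.~II, Theorem 1.g.4 and the remarks following it. The construction glues a blocking of the $\ell_p^{n_k}$'s into a subspace $Y=\overline{\operatorname{span}}$ of suitably chosen vectors. It then shows that for any bounded operator $T$ on $Y$, a trace-type functional $\beta(T)$ built from averages over the blocks satisfies $|\beta(T)-\beta(\text{identity perturbation})|$ bounded away from zero. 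It also shows that $\beta$ vanishes on the compact operators, because blocks tend weakly to zero. The failure of CAP, rather than only AP, comes precisely from this step: the averaging functional annihilates compacts, not just finite-rank maps.

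\textbf{Step 3 (separability and conclusion).} All the finite-dimensional pieces lie in $E$, so the constructed subspace is a closed infinite-dimensional subspace of $E$. Separability of $E$ is only used to ensure the relevant $\ell_p^n$'s can be selected as a sequence with pairwise far-apart supports in a fixed enumeration. This allows us to choose them almost disjointly: by a standard perturbation/gliding-hump argument, each new $\ell_p^{n_k}$ is taken almost inside the annihilator of finitely many previously used functionals.

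The main obstacle is Step 2, namely verifying that Szankowski's combinatorial estimate survives when the $\ell_p^n$'s are only almost isometric and merely almost disjoint inside $E$. I would not reprove this, since the paper's stated aim is to cite it. I would present it as the cited theorem (Szankowski, Acta Math.\ 1978, and Lindenstrauss--Tzafriri, Theorem 1.g.4, for the CAP version), with Steps 1 and 3 as the reductions to it.
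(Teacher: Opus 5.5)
This matches the paper, which also gives no independent argument: it attributes the critical-type step to Maurey--Pisier and the subspace construction, including failure of the CAP, to Szankowski, citing Lindenstrauss--Tzafriri Theorem~1.g.4 (and Dodos--L\'opez-Abad--Todorcevic). Your Steps~1--2 rest on exactly these citations. Your surrounding commentary is inessential and slightly inaccurate in places, but none of this affects the cited chain. Failing type $2-\varepsilon$ does not make $\ell_1^n$'s appear. The cotype case split is unnecessary, since Maurey--Pisier already gives $\ell_{p_E}^n$ with $p_E<2$. Separability is not really used, because the Mazur-type gliding hump works in any Banach space.
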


\begin{proof}[Reference]
	The local theory identifying the critical type is due to Maurey and Pisier
	\cite{MaureyPisier1976}, while the subspace construction is due to
	Szankowski \cite{Szankowski1978}.  The precise combined consequence, including
	failure of the compact approximation property, is recorded in
	\cite[Section~6, pp.~5--6]{DodosLopezAbadTodor2009}; see also
	\cite[Theorem~1.g.4]{LindenstraussTzafriri1979}.
\end{proof}

For brevity, we introduce terminology for the hereditary conclusion produced
by this extraction argument.

\begin{definition}\label{def:Lotz-saturated}
	A Banach space $E$ is \emph{Lotz-saturated} if every infinite-dimensional
	closed subspace of $E$ contains an infinite-dimensional closed subspace with
	the Lotz property.
\end{definition}

\begin{theorem}[Saturation criterion]\label{thm:saturation}
	Let $E$ be a separable Banach space.  Suppose that there is $m\in\N$ such
	that every infinite-dimensional closed subspace $Y\subset E$ satisfies:
	\begin{enumerate}[label=\textup{(\roman*)}]
		\item $Y$ contains a normalized sequence generating the unit vector basis
		of $\ell_1$ as a spreading model;
		\item $\Lop(Y)=\K\Id_Y+\SSop(Y)$;
		\item $S^m\in\Kop(Y)$ for every $S\in\SSop(Y)$.
	\end{enumerate}
	Then $E$ is Lotz-saturated.
\end{theorem}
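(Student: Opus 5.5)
Fix an infinite-dimensional closed subspace $Y\subset E$; we must find an infinite-dimensional closed subspace $Z\subset Y$ with the Lotz property. The plan is to extract $Z$ from $Y$ by \cref{thm:MPS} so that $Z$ fails the CAP, and then to verify the hypotheses of \cref{cor:few-operators-criterion} for $Z$ using the hereditary assumptions (i)--(iii). The hereditary assumptions are needed because they are applied to $Z$ itself, not to $Y$.

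\textbf{Step 1: extraction.} By hypothesis (i) for $Y$, there is a normalized sequence in $Y$ generating the unit vector basis of $\ell_1$ as a spreading model. \Cref{lem:l1-no-type} then shows that $Y$ has no Rademacher type $p>1$, and in particular fails type $2-\varepsilon$ for, say, $\varepsilon=1/2$. Since $Y$ is separable, being a subspace of the separable space $E$, \cref{thm:MPS} yields a closed infinite-dimensional subspace $Z\subset Y$ failing the CAP.

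\textbf{Step 2: failure of the BCAP.} The BCAP trivially implies the CAP, since one simply forgets the norm bound. Hence $Z$ fails the BCAP.

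\textbf{Step 3: operator hypotheses on $Z$.} $Z$ is an infinite-dimensional closed subspace of $E$. Applying (ii) and (iii) directly to $Z$ gives $\Lop(Z)=\K\Id_Z+\SSop(Z)$ and $S^m\in\Kop(Z)$ for all $S\in\SSop(Z)$, with the same fixed $m$. \Cref{cor:few-operators-criterion} then gives that $Z$ has the Lotz property, so $E$ is Lotz-saturated.

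\textbf{Main obstacle.} The only delicate point is making sure the extraction in Step 1 is applied to a space that genuinely fails type close to $2$. That is, we need to check that \cref{thm:MPS} is applicable with $E$ replaced by $Y$, and that the failure of all nontrivial type supplied by \cref{lem:l1-no-type} is at least as strong as failing type $2-\varepsilon$. This holds because type $q$ implies type $p$ for $p\le q$, so failing every type $p>1$ in particular means failing type $3/2$. Everything else is direct bookkeeping with the hereditary hypotheses. It is worth stressing that we never need (i) on $Z$; (i) is used only on $Y$ to perform the extraction.
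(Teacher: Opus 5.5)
Your proposal is correct and follows the paper's proof step for step: (i) and \cref{lem:l1-no-type} rule out nontrivial type for $Y$, \cref{thm:MPS} extracts $Z\subset Y$ without the CAP (hence without the BCAP), and (ii)--(iii), applied to $Z$, feed into \cref{cor:few-operators-criterion}. Your appeal to monotonicity of type is harmless but unnecessary, since \cref{lem:l1-no-type} already rules out type $3/2$ directly.
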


\begin{proof}
	Let $Y\subset E$ be infinite-dimensional and closed.  By (i) and
	\cref{lem:l1-no-type}, $Y$ has no nontrivial Rademacher type.  The space $Y$
	is separable, so \cref{thm:MPS} gives an infinite-dimensional closed subspace
	$Z\subset Y$ without the CAP.  In particular, $Z$ fails the BCAP.  Applying
	(ii) and (iii) to $Z$, and then \cref{cor:few-operators-criterion}, shows
	that $Z$ has the Lotz property.  This proves the assertion for every $Y$.
\end{proof}

We first apply the saturation criterion to
$\mathfrak X_{\mathrm{ISP}}$.

\begin{corollary}\label{cor:ISP-saturated}
	The real space $\mathfrak X_{\mathrm{ISP}}$ is Lotz-saturated.  It is itself
	non-Lotz.
\end{corollary}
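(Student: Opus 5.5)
The corollary has two parts: Lotz-saturation of $\mathfrak X_{\mathrm{ISP}}$, and the fact that $\mathfrak X_{\mathrm{ISP}}$ itself is non-Lotz. I will treat them separately.

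\emph{Lotz-saturation.} I will apply \cref{thm:saturation} with $E=\mathfrak X_{\mathrm{ISP}}$ and $m=3$. Separability is part of \cref{thm:AM}, so it remains to check hypotheses (i)--(iii) of \cref{thm:saturation} for an arbitrary infinite-dimensional closed subspace $Y$.
\begin{itemize}
\item Hypothesis (i) is exactly item (ii) of \cref{thm:AM}; the weakly null requirement there is simply extra information that we do not need.
\item Hypothesis (ii) is item (iii) of \cref{thm:AM}, with $\K=\mathbb R$.
\item Hypothesis (iii) with $m=3$ follows from item (iv) of \cref{thm:AM} by taking $Q=S=T$, as in \cref{rem:products-versus-powers}. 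This gives $S^3\in\Kop(Y)$ for every $S\in\SSop(Y)$.
\end{itemize}
\cref{thm:saturation} then yields that $\mathfrak X_{\mathrm{ISP}}$ is Lotz-saturated.

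\emph{Non-Lotz.} Here I will use that \cref{thm:AM} gives $\mathfrak X_{\mathrm{ISP}}$ a Schauder basis $(e_n)$. The one-dimensional subspaces $X_n=\K e_n$ then form a Schauder decomposition in the sense of \cref{def:Schauder-decomposition}. \cref{prop:Schauder-non-Lotz} then produces the diagonal semigroup $\sum_n e^{-nt}e_n^*(\cdot)e_n$, whose generator is unbounded. Hence $\mathfrak X_{\mathrm{ISP}}$ is non-Lotz.

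No real obstacle is expected, since every ingredient is already stated earlier. The only point worth noting explicitly is why there is no conflict with \cref{thm:criterion}. The ambient space has a basis, so it has the BAP and hence the BCAP. The criterion therefore does not apply to $\mathfrak X_{\mathrm{ISP}}$ itself. This is consistent with the saturation: the Lotz subspaces produced by \cref{thm:MPS} fail the CAP, so they cannot be the whole space, or any subspace with a basis.
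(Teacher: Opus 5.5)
Your proof is correct and matches the paper's argument: you verify the hypotheses of the saturation criterion with $m=3$ from the hereditary items of the Argyros--Motakis theorem (taking $Q=S=T$ in item (iv)), and you deduce non-Lotz from the Schauder basis via the diagonal-semigroup proposition. Your closing remark is also correct: the Calkin criterion cannot apply to the ambient space, since a basis gives the BAP and hence the BCAP.
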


\begin{proof}
	The hereditary assertions in \cref{thm:AM} imply the hypotheses of
	\cref{thm:saturation} with $m=3$; for (iii), take the three factors in
	\cref{thm:AM}(iv) equal.  Hence the space is Lotz-saturated.  On the other
	hand, $\mathfrak X_{\mathrm{ISP}}$ has a Schauder basis, so
	\cref{prop:Schauder-non-Lotz} shows that it is non-Lotz.
\end{proof}

We can now obtain the announced real example.

\begin{theorem}\label{thm:main}
	There is a separable, reflexive, hereditarily indecomposable,
	infinite-dimensional real WCG Banach space $X_L$ such that
	\begin{enumerate}[label=\textup{(\roman*)}]
		\item $X_L$ does not have the compact approximation property;
		\item every $C_0$-semigroup on $X_L$ is uniformly continuous.
	\end{enumerate}
	In particular, $X_L$ has the Lotz property.
\end{theorem}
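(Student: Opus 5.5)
The plan is to take $X_L$ to be the subspace extracted by the proof of \cref{thm:saturation}, with ambient space $E=\mathfrak X_{\mathrm{ISP}}$ and $Y=E$. By \cref{thm:AM}(ii), $\mathfrak X_{\mathrm{ISP}}$ itself contains a normalized weakly null sequence generating the unit vector basis of $\ell_1$ as a spreading model. \Cref{lem:l1-no-type} then shows that $\mathfrak X_{\mathrm{ISP}}$ has no Rademacher type $p>1$, and in particular fails type $2-\varepsilon$ for, say, $\varepsilon=1/2$. Since $\mathfrak X_{\mathrm{ISP}}$ is separable, \cref{thm:MPS} yields a closed infinite-dimensional subspace $X_L\subset\mathfrak X_{\mathrm{ISP}}$ without the compact approximation property. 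This gives item (i) directly.

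Next I would verify the structural adjectives, all of which pass to closed subspaces. Separability is inherited by subspaces. Reflexivity is inherited by closed subspaces of the reflexive space $\mathfrak X_{\mathrm{ISP}}$. The HI property is hereditary by definition, since every infinite-dimensional closed subspace of $X_L$ is one of $\mathfrak X_{\mathrm{ISP}}$ and is therefore not a topological direct sum of two infinite-dimensional closed subspaces. Finally, \cref{lem:separable-WCG} gives that $X_L$ is WCG.

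For item (ii), I would apply \cref{cor:few-operators-criterion} with $m=3$. \Cref{thm:AM}(iii) applied to $Y=X_L$ gives $\Lop(X_L)=\K\Id_{X_L}+\SSop(X_L)$. \Cref{thm:AM}(iv), with all three factors equal to $S$, gives $S^3\in\Kop(X_L)$ for every $S\in\SSop(X_L)$. Failure of the CAP implies failure of the BCAP, because bounded compact approximants are in particular compact approximants. Hence $X_L$ has the Lotz property, which means every $C_0$-semigroup on $X_L$ is uniformly continuous.

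The only delicate point is that \cref{thm:MPS} is invoked as a black box. Its precise hypotheses must match: it requires separability of the space and failure of some type $2-\varepsilon$, and its conclusion concerns the compact approximation property rather than merely the approximation property. Everything else is a bookkeeping check that each required property is hereditary. In particular, the hereditary form of \cref{thm:AM} is exactly what makes the operator-theoretic hypotheses available on the extracted subspace, whose position inside $\mathfrak X_{\mathrm{ISP}}$ we do not control.
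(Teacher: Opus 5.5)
Your proposal is correct and follows the paper's proof essentially step for step. You extract $X_L\subset\mathfrak X_{\mathrm{ISP}}$ via \cref{lem:l1-no-type} and \cref{thm:MPS}, exactly as in the proof of \cref{thm:saturation} applied to $Y=\mathfrak X_{\mathrm{ISP}}$, pass separability, reflexivity and hereditary indecomposability to the subspace, and get WCG from \cref{lem:separable-WCG}; you then conclude with \cref{cor:few-operators-criterion} for $m=3$, using the hereditary items (iii) and (iv) of \cref{thm:AM}.
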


\begin{proof}
	The proof of \cref{thm:saturation}, applied to
	$Y=\mathfrak X_{\mathrm{ISP}}$, yields a closed infinite-dimensional subspace
	\begin{equation}\label{eq:XL}
		X_L\subset\mathfrak X_{\mathrm{ISP}}
	\end{equation}
	which fails the compact approximation property.
	
	Closed subspaces of separable reflexive spaces are separable and reflexive,
	so $X_L$ has these two properties.  By \cref{lem:separable-WCG}, it is WCG.
	Since
	$\mathfrak X_{\mathrm{ISP}}$ is hereditarily indecomposable, so is $X_L$.
	Finally, the hereditary assertions in \cref{thm:AM} give
	\[
	\Lop(X_L)=\K\Id_{X_L}+\SSop(X_L)
	\]
	and
	\[
	S^3\in\Kop(X_L)
	\qquad(S\in\SSop(X_L)).
	\]
	Applying \cref{cor:few-operators-criterion} with $m=3$ proves that $X_L$ has
	the Lotz property.
\end{proof}

The dual method of Argyros and Motakis supplies two further classes of
ambient spaces.  We use their complex versions; in a complex HI space every
operator on a closed subspace is scalar-plus-strictly-singular.

\begin{theorem}[Argyros--Motakis, dual method]
	\label{thm:AM-dual}
	The construction in \cite{ArgyrosMotakisDual} has the following properties.
	\begin{enumerate}[label=\textup{(\roman*)}]
		\item For every well-founded tree $\mathcal T$ admitted by the
		construction, there is a separable reflexive complex HI space
		$\mathfrak X_{\mathcal T}$ with a Schauder basis.
		\item There is a separable complex HI space
		$\mathfrak X_{\mathcal U}$ with a shrinking Schauder basis and no
		infinite-dimensional reflexive subspace.
		\item Every infinite-dimensional closed subspace of any of these spaces
		contains a normalized sequence generating the unit vector basis of
		$\ell_1$ as a spreading model.
		\item On every infinite-dimensional closed subspace $Y$ of any of these
		spaces, every bounded operator is scalar-plus-strictly-singular, and the
		product of any two strictly singular operators on $Y$ is compact.
	\end{enumerate}
\end{theorem}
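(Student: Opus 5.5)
This statement is attributed to Argyros and Motakis, so the plan is a sourcing argument in the style of the proof of \cref{thm:AM}, not a proof from scratch. I will locate each item in \cite{ArgyrosMotakisDual}. Where their statements are in a different form, I will derive the form used here.

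\emph{Items (i) and (ii).} These are the main structural statements of the dual construction. For every admissible well-founded tree $\mathcal T$ they produce a separable reflexive HI space $\mathfrak X_{\mathcal T}$ with a Schauder basis. For the universal (ill-founded) tree they produce $\mathfrak X_{\mathcal U}$, which has a shrinking basis and no infinite-dimensional reflexive subspace. I will cite these directly. I will also note that the construction, and the HI property, carry over verbatim to complex scalars.

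\emph{Item (iii).} I will cite the spreading-model statement from \cite{ArgyrosMotakisDual}, which is typically proved on block sequences. To pass from block subspaces to arbitrary infinite-dimensional closed subspaces $Y$, I will use a standard perturbation argument. Any such $Y$ contains a normalized sequence that is a small perturbation of a normalized block sequence of the basis: a normalized weakly null sequence in $Y$, perturbed via a gliding hump. In the non-reflexive case, the existence of a weakly null sequence follows from the basis being shrinking, so bounded sequences without $\ell_1$ subsequences have weakly Cauchy subsequences, and differences are weakly null. Spreading models are stable under summable perturbations, so the $\ell_1$ spreading model transfers to $Y$.

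\emph{Item (iv).} This is the main obstacle. First, scalar-plus-strictly-singular on $Y$ follows from complex HI theory (Gowers--Maurey). For $T\in\Lop(Y)$, pick $\alpha$ in the essential spectrum boundary. Then $T-\alpha\Id_Y$ is not Fredholm. In an HI space every operator is either strictly singular or Fredholm (of index zero) after subtracting the scalar, so $T-\alpha\Id_Y\in\SSop(Y)$. Complex scalars are needed precisely so that the spectrum is nonempty. Second, the statement that products of two strictly singular operators are compact is the delicate analytic input. I will cite it from \cite{ArgyrosMotakisDual}, where it is shown for operators into the ambient space through the ``$\ell_1$ spreading model saturation plus tree structure'' estimates. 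I will then check that restricting to operators $Y\to Y$ preserves it. The check is that for $S_1,S_2\in\SSop(Y)$, strict singularity of $S_i$ as maps into $\mathfrak X$ is inherited, so the ambient result gives compactness of $S_1S_2$ composed with the inclusion, hence compactness of $S_1S_2$. If the source only states compactness of products on the whole space, the step I expect to need the most care is verifying that their proof, based on the behaviour of strictly singular operators on RIS-type sequences, is local to subspaces. I will then spell out that hereditary version.
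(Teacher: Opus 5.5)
Your plan is correct and follows the same route as the paper: a sourcing argument that takes (i)--(iii) and the hereditary compactness of products of two strictly singular operators directly from the main results of \cite{ArgyrosMotakisDual}, and obtains the scalar-plus-strictly-singular property on each closed subspace from complex hereditary indecomposability (your essential-spectrum, Gowers--Maurey argument is the standard derivation that the paper compresses into ``as recorded there''). Your perturbation step for (iii) and the restriction check in (iv) are harmless additions, and since the paper reads the source as already stating product compactness on every subspace, you should not need the fallback of re-verifying that their proof is local to subspaces.
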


\begin{proof}[Source]
	The structural and spreading-model assertions are the main theorems of
	\cite{ArgyrosMotakisDual}.  That paper also proves compactness of products of
	two strictly singular operators on every subspace.  The construction can be
	performed over the complex field; the scalar-plus-strictly-singular property
	on every closed subspace then follows from hereditary indecomposability, as
	recorded there.
\end{proof}

\begin{theorem}\label{thm:dual-Lotz-examples}
	Each complex space $\mathfrak X_{\mathcal T}$ and
	$\mathfrak X_{\mathcal U}$ in \cref{thm:AM-dual} is Lotz-saturated but is
	itself non-Lotz.  Consequently:
	\begin{enumerate}[label=\textup{(\roman*)}]
		\item every $\mathfrak X_{\mathcal T}$ contains a separable reflexive
		complex HI Lotz subspace;
		\item $\mathfrak X_{\mathcal U}$ contains a separable complex HI Lotz
		subspace with no infinite-dimensional reflexive subspace.
	\end{enumerate}
	In both cases the extracted Lotz subspace can be chosen to fail the CAP.
\end{theorem}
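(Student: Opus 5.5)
The plan mirrors the proofs of \cref{cor:ISP-saturated} and \cref{thm:main}, with \cref{thm:AM-dual} replacing \cref{thm:AM} and $m=2$ in place of $m=3$.

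\emph{Saturation.} Let $E$ be either $\mathfrak X_{\mathcal T}$ or $\mathfrak X_{\mathcal U}$; both are separable, being spaces with a Schauder basis. We check the three hypotheses of \cref{thm:saturation} with $m=2$.
\begin{enumerate}[label=\textup{(\roman*)}]
\item Hypothesis (i) is exactly \cref{thm:AM-dual}(iii).
\item Hypothesis (ii) is the first half of \cref{thm:AM-dual}(iv).
\item Hypothesis (iii) follows from the second half of \cref{thm:AM-dual}(iv): taking both factors equal to $S$ shows that $S^2$ is compact for every $S\in\SSop(Y)$.
\end{enumerate}
\Cref{thm:saturation} then gives that $E$ is Lotz-saturated.

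\emph{Non-Lotz.} Both spaces have a Schauder basis by \cref{thm:AM-dual}(i),(ii). A basis is a Schauder decomposition into one-dimensional summands, so \cref{prop:Schauder-non-Lotz} shows that each space is non-Lotz.

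\emph{The extracted subspaces.} Apply the proof of \cref{thm:saturation} to $Y=E$ itself. This produces a closed infinite-dimensional subspace $Z\subset E$ that fails the CAP, and $Z$ is Lotz by \cref{cor:few-operators-criterion}. This gives the final clause of the statement, failure of the CAP. The remaining properties of $Z$ are inherited from $E$:
\begin{itemize}
\item $Z$ is separable because $E$ is.
\item $Z$ is HI because every closed infinite-dimensional subspace of $Z$ is one of $E$.
\item In case (i), $Z$ is reflexive because closed subspaces of reflexive spaces are reflexive.
\item In case (ii), every infinite-dimensional closed subspace of $Z$ is an infinite-dimensional closed subspace of $\mathfrak X_{\mathcal U}$, hence nonreflexive by \cref{thm:AM-dual}(ii). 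So $Z$ has no infinite-dimensional reflexive subspace.
\end{itemize}

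\emph{Main obstacle.} The argument itself is routine bookkeeping. The real difficulty lies in the cited input \cref{thm:AM-dual}, specifically the validity of the complex version: the scalar-plus-strictly-singular property on every subspace, and compactness of products of two strictly singular operators. The plan takes this as given, as the statement permits.
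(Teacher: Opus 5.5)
Your proposal is correct and follows the paper's proof essentially step for step. You apply \cref{thm:saturation} with $m=2$ using \cref{thm:AM-dual}, get non-Lotz from the Schauder basis via \cref{prop:Schauder-non-Lotz}, and let the extracted CAP-failing subspace inherit separability, hereditary indecomposability, and reflexivity (for $\mathfrak X_{\mathcal T}$) or the absence of infinite-dimensional reflexive subspaces (for $\mathfrak X_{\mathcal U}$), only making a few implicit verifications explicit, such as obtaining $S^2\in\Kop(Y)$ by taking both strictly singular factors equal.
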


\begin{proof}
	Apply \cref{thm:saturation} with $m=2$, using
	\cref{thm:AM-dual}.  The ambient spaces have Schauder bases, so they are
	non-Lotz by \cref{prop:Schauder-non-Lotz}.  The extraction in the proof of
	\cref{thm:saturation} produces subspaces without the CAP.  Reflexivity passes
	to closed subspaces of $\mathfrak X_{\mathcal T}$, while every
	infinite-dimensional closed subspace of $\mathfrak X_{\mathcal U}$ remains nonreflexive by \cref{thm:AM-dual}(ii).  Separability and hereditary
	indecomposability also pass to closed subspaces, and separability implies the
	WCG property by \cref{lem:separable-WCG}.
\end{proof}

\section{The WCG question and further remarks}\label{SecWCG}

The two halves of the answer can now be placed side by side.

\begin{theorem}\label{thm:final-answer-WCG}
	The statement ``every infinite-dimensional WCG Banach space is non-Lotz'' is
	false.  It becomes true after adding the hypothesis of nonseparability.
	More precisely:
	\begin{enumerate}[label=\textup{(\roman*)}]
		\item if $X$ is a nonseparable WCG space, then $X$ is non-Lotz;
		\item the separable real space $X_L$ of \cref{thm:main} is both WCG and
		Lotz;
		\item the complex spaces extracted in \cref{thm:dual-Lotz-examples} are
		further separable WCG Lotz examples, including one with no
		infinite-dimensional reflexive subspace.
	\end{enumerate}
\end{theorem}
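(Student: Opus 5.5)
The theorem assembles results already proved, so the plan is to verify each item by citing them and then deduce the falsity of the unrestricted statement.

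For item (i), I invoke \cref{cor:nonseparable-WCG-non-Lotz}. That corollary combines \cref{thm:nonseparable-WCG-decomposition}, which extracts a Schauder decomposition from the projectional resolution of the identity, with the diagonal semigroup of \cref{prop:Schauder-non-Lotz}. The generator of that semigroup equals $-n\Id$ on $X_n$, so it is unbounded.

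For item (ii), I cite \cref{thm:main}. It gives that $X_L$ is separable, reflexive, infinite-dimensional and hereditarily indecomposable, that it fails the CAP, and that it has the Lotz property via \cref{cor:few-operators-criterion} with $m=3$. The WCG property follows from \cref{lem:separable-WCG}.

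For item (iii), I cite \cref{thm:dual-Lotz-examples}. It provides separable complex HI Lotz subspaces of $\mathfrak X_{\mathcal T}$ and of $\mathfrak X_{\mathcal U}$; the one inside $\mathfrak X_{\mathcal U}$ has no infinite-dimensional reflexive subspace. Again \cref{lem:separable-WCG} gives that these subspaces are WCG.

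The first sentence of the theorem then follows. $X_L$ is an infinite-dimensional WCG space with the Lotz property, so it is a counterexample to the unrestricted statement. Item (i) shows that adding nonseparability restores the statement.

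The only point needing care is that $X_L$ is genuinely infinite-dimensional. This holds because \cref{thm:MPS} produces an infinite-dimensional subspace. Every other step is a direct citation, so I do not expect a real obstacle.
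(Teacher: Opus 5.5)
Your proposal is correct and follows the paper's proof: item (i) is \cref{cor:nonseparable-WCG-non-Lotz}, item (ii) combines \cref{thm:main} with \cref{lem:separable-WCG}, and item (iii) follows in the same way from \cref{thm:dual-Lotz-examples}. You also note that $X_L$ is infinite-dimensional, so it refutes the unrestricted statement, and that item (i) shows nonseparability restores it; the paper leaves both points implicit.
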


\begin{proof}
	Part (i) is \cref{cor:nonseparable-WCG-non-Lotz}.  For part (ii),
	\cref{thm:main} says that $X_L$ is separable and Lotz, while
	\cref{lem:separable-WCG} says that every separable Banach space is WCG.
	Part (iii) follows in the same way from
	\cref{thm:dual-Lotz-examples}.
\end{proof}

\begin{remark}[Questions left by the method]\label{rem:BCAP-Lotz}
	The argument leaves two natural questions: does there exist an
	infinite-dimensional separable Lotz space with the BCAP, and does there exist
	one with the CAP but without the BCAP?  The first question lies outside the
	present criterion, which uses failure of the BCAP to rule out an unbounded
	generator.  The second is compatible with the criterion, but the extraction
	theorem used here supplies the stronger failure of the CAP and would have to
	be replaced.  Finally, \cref{prop:Schauder-non-Lotz} implies immediately that
	none of the Lotz subspaces constructed here admits a Schauder decomposition.
\end{remark}

\begin{remark}[Lotz-saturated but non-Lotz]
	Each ambient Argyros--Motakis space used above has a Schauder basis and hence
	is non-Lotz by \cref{prop:Schauder-non-Lotz}.  Nevertheless,
	\cref{cor:ISP-saturated,thm:dual-Lotz-examples} show that every
	infinite-dimensional closed subspace contains a further Lotz subspace.  Thus
	the Lotz property itself is not hereditary upward from subspaces, and the
	passage to a subspace without the CAP is essential to these constructions.
\end{remark}

\begin{remark}[An integral-free Yosida calculation]
	There is a shorter alternative proof of the strong convergence in
	\cref{lem:Yosida}.  If $x\in\Dom(A)$, the resolvent identity gives
	\begin{equation}\label{eq:Yosida-algebraic}
		\lambda R(\lambda,A)x-x
		=R(\lambda,A)Ax.
	\end{equation}
	Indeed, $(\lambda\Id-A)x=\lambda x-Ax$, and applying
	$R(\lambda,A)$ gives
	$x=\lambda R(\lambda,A)x-R(\lambda,A)Ax$.  By
	\eqref{eq:resolvent-bound}, for $\lambda\geq\lambda_0$,
	\[
	\|R(\lambda,A)Ax\|
	\leq\frac{M}{\lambda-\omega}\|Ax\|\longrightarrow0.
	\]
	Thus $J_\lambda x\to x$ on the dense set $\Dom(A)$.  The uniform estimate
	$\sup_{\lambda\geq\lambda_0}\|J_\lambda\|\leq2M$ then extends the
	convergence to every $x\in X$: given $y\in\Dom(A)$,
	\[
	\|J_\lambda x-x\|
	\leq(2M+1)\|x-y\|+\|J_\lambda y-y\|.
	\]
	First choose $y$ close to $x$ and then let $\lambda\to\infty$.
\end{remark}

\begin{remark}[What the Calkin proof produces]
	If an unbounded generator existed on a space satisfying the nilpotence
	hypothesis of \cref{thm:criterion}, then the family
	\[
	\bigl\{[\lambda R(\lambda,A)]^m:\lambda\geq\lambda_0\bigr\}
	\]
	would be a uniformly bounded family of compact operators converging strongly,
	and hence uniformly on compact subsets, to the identity.  Thus the proof
	establishes the sharper implication
	\[
	\begin{gathered}
		\text{existence of an unbounded $C_0$-semigroup generator}
		\\
		\Longrightarrow
		\text{bounded compact approximation property}.
	\end{gathered}
	\]
	Thus failure of the BCAP is the exact approximation-theoretic obstruction
	used by the argument; failure of the CAP is a convenient stronger condition
	provided by the extraction theorem.
\end{remark}

\begin{remark}[Scalar field]
	The Calkin and saturation criteria are valid over both $\mathbb R$ and
	$\mathbb C$.  The $\mathfrak X_{\mathrm{ISP}}$ application is stated over
	the real field exactly as in \cite{ArgyrosMotakis2014}.  For the dual-method
	spaces we deliberately use the complex versions, because hereditary
	indecomposability over $\mathbb C$ yields the scalar-plus-strictly-singular
	property on every closed subspace.
\end{remark}

\begin{remark}[Bibliographic context]
	The examples emphasized in Lotz's theorem and in the subsequent literature
	are nonseparable in infinite dimension; the model class is formed by
	$L^\infty$-type spaces, which are Grothendieck and have the Dunford--Pettis
	property \cite{Lotz1985,Budde2024}.  As explained in the introduction, a
	separable space with both of these properties must be finite-dimensional.
	The spaces constructed here are instead separable and infinite-dimensional.
	The author is not aware of a previously recorded example of this kind.  This
	is only a bibliographic qualification: the proofs combine established
	ingredients, and no priority claim is needed for the mathematical results.
\end{remark}

\begin{remark}[Further quotient-algebra directions]
	The proof suggests looking beyond scalar-plus-strictly-singular spaces.  Any
	Banach space failing the BCAP whose Calkin algebra has uniformly nilpotent
	noninvertible elements satisfies the Lotz property, whether or not its Calkin
	algebra is generated by the identity and images of strictly singular
	operators.  Recent work on nilpotent quotients of strictly singular operators
	on direct sums provides related algebraic phenomena
	\cite{LaustsenWirzenius2025}, although spaces with Schauder bases are
	automatically non-Lotz and therefore cannot be fed directly into
	\cref{thm:criterion}.
\end{remark}

\section*{Declarations}

\noindent\textbf{Competing interests.}
The authors declare that they have no competing interests.

\medskip

\noindent\textbf{Generative AI and AI-assisted technologies.}
During the preparation of this manuscript, the authors used OpenAI's ChatGPT as a generative-AI tool for exploratory calculations, consistency checks, organization of arguments, and drafting and editorial assistance. All AI-assisted material retained in the manuscript was independently reviewed and verified by the authors. The manuscript was written and edited by the authors, who take full responsibility for the content of the work.

\end{document}